\tikzstyle{Block}=[rectangle,minimum width=3cm,minimum height=1cm,text centered,text width=5.2cm,draw=black]
\tikzstyle{Implication}=[rectangle,minimum width=3cm,minimum height=1cm,text centered,text width=5.2cm]
\tikzstyle{jian}=[<->, >=stealth]
\sloppy\allowdisplaybreaks[4]
  \def\cA{{\cal A}}  
  \def\cB{{\cal B}}  
  \def\cC{{\cal C}}  
  \def\cD{{\cal D}}  
\def\dbE{\mathbb{E}}    
\def\dbF{\mathbb{F}}  \def\cF{{\cal F}}  
  \def\cG{{\cal G}}  
\def\dbH{\mathbb{H}}    
  \def\cJ{{\cal J}}  
  \def\cL{{\cal L}}  
  \def\cM{{\cal M}}  
  \def\cN{{\cal N}}  
\def\dbP{\mathbb{P}}  \def\cP{{\cal P}}  
  \def\cQ{{\cal Q}}  
\def\dbR{\mathbb{R}}  \def\cR{{\cal R}}  
\def\dbS{\mathbb{S}}  \def\cS{{\cal S}}  
  \def\cU{{\cal U}}  
  \def\cV{{\cal V}}  
  \def\cX{{\cal X}}  \def\Bx{{\bm x}}
\def\ss{\smallskip}      \def\lt{\left}       \def\hb{\hbox}
\def\ms{\medskip}        \def\rt{\right}      \def\ae{\hb{a.e.}}
\def\bs{\bigskip}        \def\lan{\langle}    \def\as{\hb{a.s.}}
\def\ds{\displaystyle}   \def\ran{\rangle}    \def\tr{\hb{tr$\,$}}
\def\ts{\textstyle}         
\def\no{\noindent}          
\def\hp{\hphantom}         
\def\nn{\nonumber}         
\def\rf{\eqref}          \def\Blan{\Big\lan}  
\def\cd{\cdot}           \def\Bran{\Big\ran}  
\def\deq{\triangleq}     \def\({\Big (}       \def\ba{\begin{aligned}}
\def\les{\leqslant}      \def\){\Big )}       \def\ea{\end{aligned}}
\def\ges{\geqslant}      \def\[{\Big[}        \def\bel{\begin{equation}\label}
          \def\]{\Big]}        \def\ee{\end{equation}}
      \def\q{\quad}        
\def\h{\widehat}               
\def\a{\alpha}        \def\Om{\Omega}  
\def\b{\beta}      \def\d{\delta}   \def\F{\Phi}     
   \def\Th{\Theta}  \def\th{\theta}  \def\Si{\Sigma}  
\def\f{\varphi} \def\L{\Lambda}  \def\l{\lambda}        \def\e{\varepsilon}
\def\t{\tau}    \def\i{\infty}      
\newtheoremstyle{thry}% name
{}      % Space above
{}      % Space below
{\sl}   % Body font
{}      % Indent amount
{\bf}   % Theorem head font
{.}     % Punctuation after theorem head
{.5em}  % Space after theorem head
{}      % Theorem head spec (can be left empty, meaning 'normal')
\theoremstyle{thry}
\newtheorem{theorem}{Theorem}[section]
\newtheorem{proposition}[theorem]{Proposition}
\newtheorem{corollary}[theorem]{Corollary}
\newtheorem{lemma}[theorem]{Lemma}
\theoremstyle{definition}
\newtheorem{definition}[theorem]{Definition}
\newtheorem{example}[theorem]{Example}
\newenvironment{taggedthm}[1]
 {\taggedthmx}
 {\endtaggedthmx}
\theoremstyle{remark}
   \newcommand{\setword}[2]{%
   \phantomsection
   #1\def\@currentlabel{\unexpanded{#1}}\label{#2}%
   }
\begin{document}

\title{\bf Two-Person Zero-Sum Stochastic Linear-Quadratic Differential Games}
\author{Jingrui Sun\thanks{Department of Mathematics, Southern University of Science and Technology,
                   Shenzhen, Guangdong, 518055, China (Email: {\tt sunjr@sustech.edu.cn}).
                   This work is supported by NSFC Grant 11901280.} }

\maketitle

\no{\bf Abstract.}
The paper studies the open-loop saddle point and the open-loop lower and upper values,
as well as their relationship for two-person zero-sum stochastic linear-quadratic
(LQ, for short) differential games with deterministic coefficients.
It derives a necessary condition for the finiteness of the open-loop lower and upper values
and a sufficient condition for the existence of an open-loop saddle point.
It turns out that under the sufficient condition, a strongly regular solution to the associated
Riccati equation uniquely exists, in terms of which a closed-loop representation is further
established for the open-loop saddle point.
Examples are presented to show that the finiteness of the open-loop lower and upper values
does not ensure the existence of an open-loop saddle point in general.
But for the classical deterministic LQ game, these two issues are equivalent and both imply
the solvability of the Riccati equation, for which an explicit representation of the solution
is obtained.

\ms
\no{\bf Key words.}
linear-quadratic differential game, two-person, zero-sum, open-loop, lower value, upper value,
saddle point, Riccati equation, closed-loop representation.

\ms
\no{\bf AMS subject classifications.} 93E20, 91A23, 49N70.

\section{Introduction}\label{Sec:Intro}

Let $(\Om,\cF,\dbP)$ be a complete probability space on which a standard one-dimensional
Brownian motion $W=\{W(t);t\ges0\}$ is defined, and let $\dbF=\{\cF_t\}_{t\ges0}$ be
the usual augmentation of the natural filtration generated by $W$.
Consider the following controlled linear stochastic differential equation (SDE, for short)
on a finite horizon $[0,T]$:
\begin{equation}\label{state}
\left\{\begin{aligned}
dX(t) &= \big[A(t)X(t)+B_1(t)u_1(t)+B_2(t)u_2(t)\big]dt \\
      &\hp{=\ } +\big[C(t)X(t)+D_1(t)u_1(t)+D_2(t)u_2(t)\big]dW(t), \\
 X(0) &= x,
\end{aligned}\right.
\end{equation}
where $A,C:[0,T]\to\dbR^{n\times n}$ and $B_i,D_i:[0,T]\to\dbR^{n\times m_i}$ $(i=1,2)$,
called the {\it coefficients} of the {\it state equation} \rf{state}, are given bounded
deterministic functions; the process $u_i$ ($i=1,2$), belonging to the space
\begin{align*}
\cU_i[0,T] &=\ts\Big\{\f:[0,T]\times\Om\to\dbR^{m_i} \bigm|
             \f~\hb{is $\dbF$-progressivley measurable,}~\dbE\int_0^T|\f(t)|^2dt<\i \Big\},
\end{align*}
is the control of Player $i$; and $x\in\dbR^n$ is a given {\it initial state}.
The criterion for the performance of $u_1$ and $u_2$ is given by the following quadratic functional:
\begin{align}\label{Q-function}
J(x;u_1,u_2) = \dbE\Bigg\{\lan GX(T),X(T)\ran +\int_0^T\Blan\!
   \begin{pmatrix}Q   & \!S_1^\top & \!S_2^\top \\
                  S_1 & \!R_{11}   & \!R_{12}   \\
                  S_2 & \!R_{21}   & \!R_{22}   \end{pmatrix}\!
   \begin{pmatrix}X \\ u_1 \\ u_2  \end{pmatrix}\!,
   \begin{pmatrix}X \\ u_1 \\ u_2  \end{pmatrix}\! \Bran dt\Bigg\},
\end{align}
where $G$ is an $n\times n$ symmetric real matrix; $Q:[0,T]\to\dbR^{n\times n}$, $S_i:[0,T]\to\dbR^{m_i\times n}$,
and $R_{ij}:[0,T]\to\dbR^{m_i\times m_j}$ ($i,j=1,2$) are bounded functions with
$$ Q(t)^\top = Q(t),
\q \begin{pmatrix}R_{11}(t) & \!R_{12}(t) \\
                  R_{21}(t) & \!R_{22}(t) \end{pmatrix}^\top
 = \begin{pmatrix}R_{11}(t) & \!R_{12}(t) \\
                  R_{21}(t) & \!R_{22}(t) \end{pmatrix}, \q\forall t\in[0,T]. $$
In the Lebesgue integral on the right-hand side of \rf{Q-function}, the variable $t$ is suppressed for convenience.

\ms

The functional \rf{Q-function} can be regarded as the loss of Player 1 and the gain of Player 2.
So in this {\it two-person zero-sum stochastic linear-quadratic differential game} (Problem (SLQG), for short),
Player 1 wants to find his/her control that minimizes the loss, while Player 2 wants to find
his/her control that maximizes the gain.
The best choice for the two players is a control pair such that no one can benefit
by changing his/her control while the other keeps his/her unchanged.
Such a pair $(u_1^*,u_2^*)$ is called an {\it open-loop saddle point}, mathematically
defined by the following inequalities:
\begin{align*}
J(x;u_1^*,u_2) \les J(x;u_1^*,u_2^*) \les J(x;u_1,u_2^*), \q\forall (u_1,u_2)\in\cU_1[0,T]\times\cU_2[0,T].
\end{align*}
Another two important notions in game theory are the {\it open-loop lower and upper values} defined as
\begin{align*}
 V^-(x) = \sup_{u_2\in\cU_2[0,T]} \inf_{u_1\in\cU_1[0,T]} J(x;u_1,u_2) \q\hb{and}\q
 V^+(x) = \inf_{u_1\in\cU_1[0,T]} \sup_{u_2\in\cU_2[0,T]} J(x;u_1,u_2),
\end{align*}
respectively. It is clear that $V^-(x)\les V^+(x)$. In the case of $V^-(x)=V^+(x)$,
we denote by $V(x)$ the common value and say that the game has an {\it open-loop value} at $x$.

\ms

Linear-quadratic (LQ, for short) differential games constitute an important class of differential games.
They are widely encountered in many fields, such as engineering, economy, and biology,
and also play an essential role in the study of general differential games
(see, for example, \cite{Engwerda2009,Carmona2016}).
The study of deterministic LQ differential games (Problem (DLQG), for short),
in which the state evolves according to an ordinary differential equation (ODE, for short),
can be traced back to the work of Ho--Bryson--Baron \cite{Ho-Bryson-Baron1965},
in the context of a linearized pursuit-evasion game.
Later, Schmitendorf \cite{Schmitendorf1970} studied the open-loop and closed-loop
strategies for Problem (DLQG) in a rigorous framework and showed that the existence
of a closed-loop saddle point might not imply the existence of an open-loop saddle point.
In 1979, Bernhard \cite{Bernhard1979} considered the zero-sum game with the additional
restriction on the final state from a closed-loop point of view; see also the follow-up
work of Ba\c{s}ar--Bernhard \cite{Basar-Bernhard1995}.
In 2005, Zhang \cite{Zhang2005} established the equivalence among the existence of a finite
open-loop value, the finiteness of open-loop lower and upper values, and the existence
of an open-loop saddle point for a class of deterministic LQ differential games.
The results of Zhang \cite{Zhang2005} were later sharpened by Delfour \cite{Delfour2007}
in 2007 and generalized to closed-loop LQ differential games by Delfour--Sbarba
\cite{Delfour-Sbarba2009} in 2009.
Two-person zero-sum stochastic LQ differential games (Problem (SLQG)) have also been considered
by many authors.
Mou--Yong \cite{Mou-Yong2006} studied Problem (SLQG) from an open-loop point of view by means
of the Hilbert space method.
Sun--Yong \cite{Sun-Yong2014} established the characterizations of open-loop and closed-loop
saddle points for Problem (SLQG) and investigated their relationship
(see also the books \cite{Yong2015,Sun-Yong2020b}).
Based on the idea in \cite{Sun-Yong2014}, Sun--Yong--Zhang \cite{Sun-Yong-Zhang2016} further
explored stochastic LQ differential games over infinite horizons.
Yu \cite{Yu2015} studied the optimal feedback control-strategy pair for Problem (SLQG) using
a Riccati equation approach.
There are many other works on LQ differential games, among which we would like to mention
the works \cite{Hamadene1998,Hamadene1999,Sun-Yong2019} on nonzero-sum LQ games and
the works \cite{Bensoussan-Sung-Yam-Yung2016,Ran-Yu-Zhang2020} on mean-field LQ games.

\ms

In this paper, the analysis of the above two-person zero-sum stochastic LQ differential game
mainly focuses on the open-loop saddle point and the open-loop lower and upper values, as well
as their relationship.
Our approach is partially based on the recently developed results on two-person zero-sum
stochastic LQ differential games and indefinite stochastic LQ optimal control problems (see \cite{Sun-Yong2014,Sun-Li-Yong2016}).
The main contribution of this paper can be briefly summarized as follows
(A complete summary of the results is presented in the conclusion section; see \autoref{fig:summary}).

\ms

(i) It is found that in general the finiteness of the open-loop lower and upper values does not
ensure the existence of an open-loop saddle point (see \autoref{exm:saddle-novuale}), which is
different from Zhang's equivalence result \cite{Zhang2005} for deterministic LQ differential games.
In fact, the finiteness of the open-loop lower and upper values does not even imply the existence
of an open-loop value (see \autoref{V-<V<V+})
The reason is that in \cite{Zhang2005}, the stochastic part is absent and an additional assumption
is imposed on the weighting matrices for the controls, i.e., $R_{11}$ is required to be uniformly
positive definite and $R_{22}$ is required to be uniformly negative definite.

\ms

(ii) A necessary condition and a sufficient condition are derived for the existence of finite
open-loop values. These two conditions are closely related to indefinite stochastic LQ optimal
problems. It is shown that under the sufficient condition, the associated Riccati equation is
{\it strongly regularly solvable} (see \autoref{thm:main-Ric}), and consequently, a unique
open-loop saddle point exists for every initial state and admits a closed-loop representation
(see \autoref{thm:main-saddle}).
The solvability of the Riccati equation constitutes the most difficult part of the paper.
We overcome this difficulty by exploring the connection between the stochastic LQ differential
game and two stochastic LQ optimal control problems and examining the local existence of solutions
for the Riccati equation.

\ms

(iii) For the deterministic two-person zero-sum LQ differential game, which can be regarded as
a special case of the stochastic game, we establish the equivalence between the existence of an
open-loop saddle point and the finiteness of the open-loop lower and upper values by a new approach
(see \autoref{thm:DLQG-main}).
More importantly, we find that in the deterministic case, the finiteness of the open-loop lower and
upper values also implies the solvability of the Riccati equation, for which we obtain an explicit
representation of the solution (see \autoref{thm:DLQG-main} and \autoref{crllry:rep-P}).

\ms

The rest of the paper is organized as follows.
In \autoref{Sec:Preliminary} we give the preliminaries and collect some recently developed results
on stochastic LQ optimal control problems.
In \autoref{Sec:Value} we study the open-loop lower and upper values,
and in \autoref{Sec:Saddle} we establish the solvability of the associated Riccati equation as well
as the closed-loop representation of the open-loop saddle point.
In \autoref{Sec:Relation}, the relationship between the open-loop saddle and the open-loop lower and
upper values is discussed, and an equivalence result is presented for deterministic two-person zero-sum
LQ differential games. \autoref{Sec:Conclusion} concludes the paper.

\section{Preliminaries}\label{Sec:Preliminary}

Throughout this paper, $\dbR^{n\times m}$ denotes the Euclidean space of $n\times m$ real
matrices, equipped with the Frobenius inner product
$$ \lan M,N\ran=\tr(M^\top N), \q M,N\in\dbR^{n\times m}, $$
where $M^\top$ is the transpose of $M$ and $\tr(M^\top N)$ is the trace of $M^\top N$.
The norm induced by the Frobenius inner product is denoted by $|\cd|$.
The identity matrix of size $n$ is denoted by $I_n$, which is often simply written as $I$
when no confusion occurs.
Let $\dbS^n$ be the space of symmetric $n\times n$ real matrices and $\bar\dbS^n_+$ the
space of symmetric positive semidefinite $n\times n$ real matrices.
For $\dbS^n$-valued functions $M,N$ on $[0,T]$, we write $M\ges N$ (respectively, $M\les N$)
if $M(t)-N(t)\in\bar\dbS^n_+$ (respectively, $N(t)-M(t)\in\bar\dbS^n_+$)
for almost every $t\in[0,T]$, and we write $M\gg N$ (respectively, $M\ll N$)
if there exists a constant $\a>0$ such that $M(t)-N(t)\ges\a I$ (respectively, $N(t)-M(t)\ges\a I$)
for almost every $t\in[0,T]$.

\ms

Recall that $\dbF=\{\cF_t\}_{t\ges0}$ is the usual augmentation of the natural filtration
generated by the Brownian motion $W$.
For a process $\f$, we write $\f\in\dbF$ if it is $\dbF$-progressively measurable.
Let $\dbH$ be a subset of some Euclidean space. In the following table we list some spaces
that will be frequently used in the sequel.
\begin{align*}
C([0,T];\dbH)
  &\ts= \Big\{\f:[0,T]\to\dbH\bigm|\f~\hb{is continuous} \Big\}, \\
L^\i(0,T;\dbH)
  &\ts= \Big\{\f:[0,T]\to\dbH\bigm|\f~\hb{is Lebesgue essentially bounded} \Big\}, \\
L^2(0,T;\dbH)
  &\ts= \Big\{\f:[0,T]\to\dbH\bigm|\int_0^T|\f(t)|^2dt<\i \Big\}, \\
L_\dbF^2(0,T;\dbH)
  &\ts= \Big\{\f:[0,T]\times\Om\to\dbH\bigm|\f\in\dbF~\hb{and}~\dbE\int_0^T|\f(t)|^2dt<\i \Big\}.
\end{align*}
Note that $L_\dbF^2(0,T;\dbR^m)$ is a Hilbert space under the usual product
$$ [\![ u,v ]\!] = \dbE\int_0^T \lan u(t),v(t)\ran dt, \q u,v\in L_\dbF^2(0,T;\dbR^m). $$
We denote the induced norm of a process $u\in L_\dbF^2(0,T;\dbR^m)$ by $\|u\|$.
In terms of the above notation, we see that for Player $i$, the space of controls is
$$ \cU_i[0,T] = L_\dbF^2(0,T;\dbR^{m_i}). $$
As mentioned in the introduction section, we assume that the coefficients of the state equation \rf{state}
and the weighting matrices in the quadratic functional \rf{Q-function} satisfy the following conditions.
\begin{enumerate}\setlength{\itemindent}{3.7pt}
\item[{\bf\setword{(A1)}{A1}}]
     $A,C:[0,T]\to\dbR^{n\times n}$ and $B_i,D_i:[0,T]\to\dbR^{n\times m_i}$ $(i=1,2)$ are bounded,
     Lebesgue measurable functions, i.e.,
     $$ A,C\in L^\i(0,T;\dbR^{n\times n}), \q B_i,D_i\in L^\i(0,T;\dbR^{n\times m_i}). $$
\item[{\bf\setword{(A2)}{A2}}]
     $G\in\dbS^n$, $Q\in L^\i(0,T;\dbS^n)$, and for $i,j=1,2$,
     $$ S_i\in L^\i(0,T;\dbR^{m_i\times n}),
        \q R_{ij}\in L^\i(0,T;\dbR^{m_i\times m_j}), \q R_{ij}^\top =R_{ji}. $$
\end{enumerate}

\ss

Next we collect some results from stochastic LQ optimal control theory.
Consider the state equation
\begin{equation}\label{state:SLQ}\left\{\begin{aligned}
d\cX(t) &= \big[\cA(t)\cX(t)+\cB(t)v(t)\big]dt + \big[\cC(t)\cX(t)+\cD(t)v(t)\big]dW(t),\q t\in[0,T], \\
 \cX(0) &= x,
\end{aligned}\right.\end{equation}
and the cost functional
\begin{align}\label{cost:SLQ}
\cJ(x;v) =\dbE\Bigg\{\lan\cG\cX(T),\cX(T)\ran
+\int_0^T\Blan\!\begin{pmatrix}\cQ(t) & \!\cS(t)^\top \\
                               \cS(t) & \!\cR(t)      \end{pmatrix}\!
                \begin{pmatrix}\cX(t) \\ v(t) \end{pmatrix}\!,
                \begin{pmatrix}\cX(t) \\ v(t) \end{pmatrix}\! \Bran dt \Bigg\},
\end{align}
where in \rf{state:SLQ},
$$ \cA,\cC \in L^\i(0,T;\dbR^{n\times n}), \q \cB,\cD \in L^\i(0,T;\dbR^{n\times m}), $$
and in \rf{cost:SLQ},
$$ \cG\in\dbS^n, \q \cQ\in L^\i(0,T;\dbS^n), \q \cS\in L^\i(0,T;\dbR^{m\times n}), \q\cR\in L^\i(0,T;\dbS^m). $$
The stochastic LQ optimal control problem is as follows.

\begin{taggedthm}{Problem (SLQ)}\label{Prob:SLQ}
For a given initial state $x\in\dbR^n$, find a control $v^*\in L_\dbF^2(0,T;\dbR^m)$ such that
\begin{equation}\label{SLQ:v*}
  \cJ(x;v^*) = \inf_{v\in L_\dbF^2(0,T;\dbR^m)}\cJ(x;v) \equiv \cV(x).
\end{equation}
\end{taggedthm}

The control $v^*\in L_\dbF^2(0,T;\dbR^m)$ in \rf{SLQ:v*} is called an {\it open-loop optimal control}
for the initial state $x$, and $\cV(x)$ is called the {\it value} of Problem (SLQ) at $x$.

\ms

The following lemmas summarize a few results for Problem (SLQ) that will be needed in the subsequent sections.
The reader is referred to Sun--Li--Yong \cite{Sun-Li-Yong2016} for proofs; see also the recent book \cite{Sun-Yong2020}
by Sun--Yong.

\begin{lemma}\label{lmm:SLQ-main-1}
If $\cV(x)>-\i$ for some initial state $x$, then
$$ \cJ(0;v)\ges0, \q\forall v\in L_\dbF^2(0,T;\dbR^m). $$
\end{lemma}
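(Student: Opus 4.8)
The plan is to argue by contraposition: I will show that if $\cJ(0;v_0)<0$ for some $v_0\in L_\dbF^2(0,T;\dbR^m)$, then $\cV(x)=-\i$ for \emph{every} initial state $x$. Fix an arbitrary initial state $x$ and let $v\in L_\dbF^2(0,T;\dbR^m)$ be any fixed admissible control, with associated state process $\cX$ solving \rf{state:SLQ}. The first step is to record the homogeneity/additivity structure of the system: for a scalar $\l\in\dbR$, the control $v+\l v_0$ produces the state $\cX+\l\cX_0$, where $\cX_0$ is the solution of \rf{state:SLQ} with zero initial condition and control $v_0$ (here I use linearity of the SDE and uniqueness of solutions). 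Consequently $\cJ(x;\cd)$ is a quadratic functional along the line $\{v+\l v_0:\l\in\dbR\}$, and expanding the quadratic form gives
\begin{equation}\label{eq:quad-expand}
\cJ(x;v+\l v_0) = \cJ(x;v) + 2\l\, \G(x,v,v_0) + \l^2\,\cJ(0;v_0),
\end{equation}
where $\G(x,v,v_0)$ is the cross (bilinear) term, an explicit but finite real number depending on $x$, $v$, and $v_0$ — concretely $\G(x,v,v_0)=\dbE\{\lan\cG\cX(T),\cX_0(T)\ran+\int_0^T[\lan\cQ\cX,\cX_0\ran+\lan\cS^\top v,\cX_0\ran+\lan\cS\cX,v_0\ran+\lan\cR v,v_0\ran]dt\}$, finite because all coefficients are in $L^\i$ and $\cX,\cX_0,v,v_0\in L_\dbF^2$.

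The second step is the limiting argument: in \rf{eq:quad-expand} the coefficient of $\l^2$ is $\cJ(0;v_0)<0$ by assumption, so letting $\l\to+\i$ (or $\l\to-\i$, whichever makes the linear term nonpositive) forces $\cJ(x;v+\l v_0)\to-\i$. Since $v+\l v_0\in L_\dbF^2(0,T;\dbR^m)$ for every $\l$, this shows $\cV(x)=\inf_{v'}\cJ(x;v')=-\i$. As $x$ was arbitrary, we conclude: if $\cV(x)>-\i$ for \emph{some} $x$, then no such $v_0$ exists, i.e. $\cJ(0;v)\ges0$ for all $v\in L_\dbF^2(0,T;\dbR^m)$, which is the claim.

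I do not expect a genuine obstacle here; the only point requiring a little care is the verification that all the quantities entering the expansion \rf{eq:quad-expand} are finite, which follows from the standard a priori estimate $\dbE\sup_{t\in[0,T]}|\cX(t)|^2\les K(|x|^2+\|v\|^2)$ for the linear SDE \rf{state:SLQ} (valid since $\cA,\cB,\cC,\cD\in L^\i$) together with boundedness of the weighting coefficients $\cG,\cQ,\cS,\cR$. Everything else is elementary algebra with the quadratic functional.
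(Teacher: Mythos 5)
Your argument is correct: the quadratic expansion $\cJ(x;v+\l v_0)=\cJ(x;v)+2\l\,\G+\l^2\cJ(0;v_0)$ with a strictly negative $\l^2$-coefficient indeed forces $\cV(x)=-\i$, and the finiteness of the cross term follows from the standard a priori estimate as you note. The paper does not prove this lemma itself (it cites Sun--Li--Yong \cite{Sun-Li-Yong2016}), but your argument is exactly the scaling technique the paper employs in its proof of \autoref{thm:tu-ao}, so this is essentially the intended route.
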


\begin{lemma}\label{lmm:SLQ-main-2}
If for some constant $\a>0$,
$$ \cJ(0;v)\ges \a\|v\|^2, \q\forall v\in L_\dbF^2(0,T;\dbR^m), $$
then the following hold:
\begin{enumerate}[\rm(i)]\setlength{\itemsep}{-1pt}
\item For every initial state $x$, a unique open-loop optimal control exists.
\item The Riccati differential equation
      \begin{equation}\label{Ric:SLQ}\left\{\begin{aligned}
      & \dot\cP + \cP\cA + \cA^\top\cP + \cC^\top\cP\cC + \cQ\\
      & \hp{\dot\cP} -(\cP\cB + \cC^\top\cP\cD + \cS^\top)(\cR+\cD^\top\cP\cD)^{-1}
                      (\cB^\top\cP + \cD^\top\cP\cC + \cS)=0, \\
      & \cP(T) = \cG
      \end{aligned}\right.\end{equation}
      admits a unique solution $\cP\in C([0,T];\dbS^n)$ such that
      \begin{equation*}
       \cR+\cD^\top\cP\cD \gg 0.
      \end{equation*}
      In particular, if $\cG\ges0$, $\cQ\ges0$, and $\cR\gg0$,
      then \rf{Ric:SLQ} has a unique nonnegative solution $\cP\in C([0,T];\bar\dbS^n_+)$.
\item The unique open-loop optimal control $v^*$ for the initial state $x$ admits the following
      closed-loop representation:
      $$ v^* = -(\cR+\cD^\top\cP\cD)^{-1}(\cB^\top\cP + \cD^\top\cP\cC + \cS)X, $$
      and the value at $x$ is given by
      $$ \cV(x) = \lan \cP(0)x,x\ran. $$
\end{enumerate}
\end{lemma}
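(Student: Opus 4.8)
\ms
\no{\bf Sketch of proof.}
The plan is to settle (i) by abstract convex analysis, to deduce (iii) — and with it the uniqueness claim of (i) — from (ii) by a completion-of-squares computation, and to concentrate the real effort on the solvability statement (ii). For (i): since the solution of \rf{state:SLQ} depends affinely on $v$ (linearly when $x=0$), the map $v\mapsto\cJ(x;v)$ is a continuous quadratic functional on the Hilbert space $L_\dbF^2(0,T;\dbR^m)$, and a direct It\^o expansion gives the standard identity
\[
\cJ\big(x;\l v_1+(1-\l)v_2\big)=\l\,\cJ(x;v_1)+(1-\l)\,\cJ(x;v_2)-\l(1-\l)\,\cJ(0;v_1-v_2),\qq\l\in[0,1].
\]
By the hypothesis $\cJ(0;\cd)\ges\a\|\cd\|^2$ this makes $\cJ(x;\cd)$ $\a$-uniformly (in particular strictly) convex; writing also $\cJ(x;v)=\cJ(x;0)+\ell_x(v)+\cJ(0;v)$ with $\ell_x$ linear shows $\cJ(x;\cd)$ is coercive. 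A continuous, coercive, strictly convex functional on a Hilbert space attains its infimum at a unique point, which is (i).

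For (iii), assume (ii) and abbreviate $\cK=\cR+\cD^\top\cP\cD$, $\cL=\cB^\top\cP+\cD^\top\cP\cC+\cS$, and $\Th=-\cK^{-1}\cL$. Applying It\^o's formula to $t\mapsto\lan\cP(t)\cX(t),\cX(t)\ran$ along an arbitrary state--control pair of \rf{state:SLQ}, integrating over $[0,T]$, taking expectation, and substituting the Riccati equation \rf{Ric:SLQ} to complete the square leads to
\[
\cJ(x;v)=\lan\cP(0)x,x\ran+\dbE\int_0^T\big\lan\cK(t)\big(v(t)-\Th(t)\cX(t)\big),\,v(t)-\Th(t)\cX(t)\big\ran dt .
\]
Since $\cK\gg0$, the integral is nonnegative and vanishes precisely when $v=\Th\cX$ a.e.; taking $v$ to be the closed-loop control generated by $\Th$ attains the minimum, which at once yields the closed-loop representation, the value formula $\cV(x)=\lan\cP(0)x,x\ran$, and uniqueness of the open-loop optimal control (re-proving the uniqueness part of (i)).

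The substance of the lemma is (ii), which I would prove in four steps. \emph{Step 1 (local solvability near $T$).} Testing $\cJ(0;\cd)\ges\a\|\cd\|^2$ against controls supported on $[T-h,T]$ and using standard SDE estimates to isolate the leading behaviour $\cJ(0;v)=\dbE\int_{T-h}^T\lan(\cR+\cD^\top\cG\cD)v,v\ran dt+O(\sqrt{h})\|v\|^2$, one obtains $\cR+\cD^\top\cG\cD\ges\frac{\a}{2}I$ for a.e.\ $t$ near $T$; hence running Picard iteration in a small $C([T-\d,T];\dbS^n)$-ball around $\cG$ — on which the drift of \rf{Ric:SLQ} is Lipschitz in $\cP$ because $\cK\ges\frac{\a}{4}I$ there — produces a strongly regular solution (one with $\cK\gg0$) on some $[T-\d,T]$. \emph{Step 2 (a priori bound).} On any $[s,T]$ carrying a strongly regular solution, the identity of (iii) read on $[s,T]$ identifies $\lan\cP(s)x,x\ran$ with the value $\cV_s(x)$ of Problem (SLQ) on $[s,T]$; the upper bound $\cV_s(x)\les\cJ_s(x;0)\les C|x|^2$ together with the lower bound coming from the inherited coercivity $\cJ_s(0;\cd)\ges\a\|\cd\|^2$ (after completing the square in the cross term in $x$ and $v$) give $|\lan\cP(s)x,x\ran|\les C'|x|^2$, so $\cP$ stays uniformly bounded along its interval of existence. \emph{Step 3 (propagation of strong regularity).} For the short-horizon problem on $[t,t+\e]$ with terminal weight $\cP(t+\e)$, dynamic programming (extend a control on $[t,t+\e]$ by the optimal one on $[t+\e,T]$ and use $\cJ_t(0;\cd)\ges\a\|\cd\|^2$) gives uniform convexity of its cost with the same constant $\a$; rerunning the Step 1 test-function argument for this problem and letting $\e\downarrow0$ yields $\cK(t)=\cR(t)+\cD(t)^\top\cP(t)\cD(t)\ges\frac{\a}{2}I$ at a.e.\ $t$ of the interval of existence. \emph{Step 4 (continuation and uniqueness).} With $\cP$ bounded and $\cK\ges\frac{\a}{2}I$, the drift of \rf{Ric:SLQ} is bounded, so the maximal solution cannot blow up and, by the local existence argument applied at its left endpoint, is defined on all of $[0,T]$; any strongly regular solution represents the values $\cV_s$, hence is unique. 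Finally, for the ``in particular'' claim: when $\cG\ges0$, $\cQ\ges0$, $\cR\gg0$ one has $\cJ_s(x;v)\ges0$, so $0\les\lan\cP(s)x,x\ran\les\cJ_s(x;0)\les C|x|^2$ along the local solution, forcing $\cP\ges0$ and hence $\cK\ges\cR\gg0$ automatically; the same continuation argument then produces a global nonnegative solution.

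The hard part is Step 3, the propagation of strong regularity: one must convert the purely functional-analytic hypothesis $\cJ(0;\cd)\ges\a\|\cd\|^2$ into a pointwise a.e.\ lower bound on $\cR+\cD^\top\cP\cD$ even though the coefficients are merely measurable, and this is what forces the delicate localization above (test functions concentrated near a moving terminal time, combined with the dynamic-programming inheritance of uniform convexity). Once (iii) is in hand, the a priori boundedness of $\cP$ in Step 2 and the continuation in Step 4 are comparatively routine.
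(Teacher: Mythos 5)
The paper does not actually prove this lemma: it is quoted as a known result, with the proof deferred to Sun--Li--Yong \cite{Sun-Li-Yong2016} and the book \cite{Sun-Yong2020}, so there is no in-paper argument to compare yours against line by line. That said, your sketch is a faithful reconstruction of the standard proof from that reference, and it is sound: the uniform-convexity identity and coercivity argument for (i), the completion-of-squares verification for (iii), and the four-step scheme for (ii) (local solvability near $T$ from the induced positivity of $\cR+\cD^\top\cG\cD$, an a priori bound on $\cP$ via identification with the value on $[s,T]$, propagation of $\cR+\cD^\top\cP\cD\ges\tfrac{\a}{2}I$ by dynamic programming, and continuation) are exactly the ingredients of the known argument. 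It is worth noting that the paper itself reuses precisely these techniques later for the \emph{game} Riccati equation: the Lipschitz-drift/Picard local existence in \autoref{prop:local-sol-Ric}, the comparison bounds in \autoref{prop:comparison}, and the continuation argument in the proof of \autoref{thm:main-Ric} are the two-player analogues of your Steps 1, 2, and 4, so your reconstruction is consistent with the paper's own toolbox. The only place where your sketch compresses a genuinely delicate point is the passage from the integrated inequality $\dbE\int\lan(\cR+\cD^\top\cP\cD)v,v\ran dt\ges\tfrac{\a}{2}\|v\|^2$ to the pointwise a.e.\ bound $\cR+\cD^\top\cP\cD\ges\tfrac{\a}{2}I$ in Steps 1 and 3: with merely measurable coefficients this requires testing with controls concentrated at Lebesgue points (and constant unit vectors in $\dbR^m$), which you allude to but do not carry out; this is not a gap in the strategy, only the step that needs the most care if written in full.
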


%\begin{lemma}\label{lmm:Sun2016}
%Let $\cX^v_0$ denote the solution of \rf{state:SLQ} with initial state $x=0$.
%For every $\Th\in L^2(0,T;\dbR^{m\times n})$, there exists a constant $\a>0$ such that
%%
%$$ \dbE\int_0^T|v(t)-\Th(t) X^v_0(t)|^2dt \ges \a\,\dbE\int_0^T|v(t)|^2dt,
%   \q\forall v\in L_\dbF^2(0,T;\dbR^m). $$
%\end{lemma}

\section{Open-loop lower and upper values}\label{Sec:Value}

In this section we study the open-loop lower and upper values of the two-person zero-sum stochastic
LQ differential game. We derive a necessary condition and a sufficient condition for the finiteness
of the open-loop lower and upper values. First, let us recall the following definition.

\begin{definition}
The {\it open-loop lower value} $V^-(x)$ and the {\it open-loop upper value} $V^+(x)$
at the initial state $x\in\dbR^n$ are defined by
\begin{align*}
V^-(x) = \sup_{u_2\in\cU_2[0,T]} \inf_{u_1\in\cU_1[0,T]} J(x;u_1,u_2) \q\hb{and}\q
V^+(x) = \inf_{u_1\in\cU_1[0,T]} \sup_{u_2\in\cU_2[0,T]} J(x;u_1,u_2),
\end{align*}
respectively. Note that for every $x\in\dbR^n$,
$$ V^-(x) \les V^+(x). $$
If the above holds with equality, we call the common value, denoted by $V(x)$,
an {\it open-loop value} at the initial state $x$.
\end{definition}

It is shown in \cite{Zhang2005} that for a special class of deterministic two-person zero-sum LQ
differential games, if both the open-loop lower and upper values are finite, they must be equal.
However, this result does not hold in general. Here is an example.

\begin{example}\label{V-<V<V+}
Consider the one-dimensional state equation
$$\left\{\begin{aligned}
dX(t) &= \sqrt{t}\,u_1(t)dt + t\,u_2(t) dW(t),\q t\in[0,1], \\
 X(0) &= x,
\end{aligned}\right.$$
and the quadratic functional
$$ J(x;u_1,u_2) = \dbE\bigg\{ |X(1)|^2 + \int_0^1 \[2tu_1(t)u_2(t)-t^2|u_2(t)|^2\] dt\bigg\}. $$
We claim that
$$ V^+(x)= x^2, \q V^-(x)=0, \q\forall x\in\dbR. $$
To verify the claim, we observe first that
\begin{align*}
\dbE|X(1)|^2
&= \dbE\bigg[x+\int_0^1\sqrt{t}\,u_1(t)dt\bigg]^2 + \dbE\int_0^1 t^2|u_2(t)|^2dt \\
&~\hp{=} +2\dbE\bigg[\int_0^1\sqrt{t}\,u_1(t)dt\int_0^1t\,u_2(t)dW(t)\bigg].
\end{align*}
It follows that
\begin{align}\label{ex3.2:J=}
J(x;u_1,u_2)
&= \dbE\bigg[x+\int_0^1\sqrt{t}\,u_1(t)dt\bigg]^2 + 2\dbE\int_0^1 tu_1(t)u_2(t)dt \nn\\
&~\hp{=} +2\dbE\bigg[\int_0^1\sqrt{t}\,u_1(t)dt\int_0^1t\,u_2(t)dW(t)\bigg].
\end{align}
Clearly, we have
\begin{align}\label{ex3.2:u1=0}
 \sup_{u_2\in\cU_2[0,1]} J(x;0,u_2) = x^2.
\end{align}
For $u_1\ne0$ and $u_2=\l u_1$ ($\l>0$),
\begin{align*}
2\dbE\bigg[\int_0^1\sqrt{t}\,u_1(t)dt\int_0^1t\,u_2(t)dW(t)\bigg]
&= 2\l\dbE\bigg[\int_0^1\sqrt{t}\,u_1(t)dt\int_0^1t\,u_1(t)dW(t)\bigg]\\
&\ges -\l\dbE\bigg[\bigg(\int_0^1\sqrt{t}\,u_1(t)dt\bigg)^2 + \bigg(\int_0^1t\,u_1(t)dW(t)\bigg)^2 \bigg] \\
&\ges -\l\dbE\int_0^1t|u_1(t)|^2dt - \l\dbE\int_0^1t^2|u_1(t)|^2dt.
\end{align*}
Thus, we have
\begin{align*}
J(x;u_1,\l u_1) \ges \dbE\bigg[x+\int_0^1\sqrt{t}\,u_1(t)dt\bigg]^2 + \l\dbE\int_0^1 (t-t^2)|u_1(t)|^2dt.
\end{align*}
Since for $u_1\ne0$,
$$ \dbE\int_0^1 (t-t^2)|u_1(t)|^2dt>0, $$
it follows that
\begin{align}\label{ex3.2:u1not=0}
\sup_{u_2\in\cU_2[0,1]} J(x;u_1,u_2)\ges \sup_{\l>0} J(x;u_1,\l u_1) =\i, \q\forall u_1\ne0.
\end{align}
Combining \rf{ex3.2:u1=0} and \rf{ex3.2:u1not=0} we obtain
$$ V^+(x)=\inf_{u_1\in\cU_1[0,1]} \sup_{u_2\in\cU_2[0,1]} J(x;u_1,u_2)=x^2. $$
To show $V^-(x)=0$, we note that for any $u_2\in\cU_2[0,1]$, $\sqrt{t}$ and $\dbE[tu_2(t)]$ are
elements of the Hilbert space $L^2(0,1;\dbR)$.
Since $\sqrt{t}$ is not in the (one-dimensional) space generated by $\dbE[tu_2(t)]$,
by the Hahn-Banach theorem there exists a $\bar u_1\in L^2(0,1;\dbR)\subseteq\cU_1[0,1]$ such that
$$ \int_0^1 \sqrt{t}\,\bar u_1(t) dt =-x \q\hb{and}\q \int_0^1 \bar u_1(t)\dbE[tu_2(t)] dt=0. $$
Together with \rf{ex3.2:J=}, it gives (noting that $\bar u_1$ is deterministic)
\begin{align*}
\inf_{u_1\in\cU_1[0,1]}J(x;u_1,u_2)\les J(x;\bar u_1,u_2)=0, \q\forall u_2\in\cU_2[0,1].
\end{align*}
On the other hand, it is trivial that
$$\inf_{u_1\in\cU_1[0,1]} J(x;u_1,0)\ges0.$$
Thus,
$$V^-(x) = \sup_{u_2\in\cU_2[0,1]} \inf_{u_1\in\cU_1[0,1]} J(x;u_1,u_2) =0.$$
This proves our claim.
\end{example}

The following result gives necessary conditions for the finiteness of the open-loop lower and upper values.

\begin{theorem}\label{thm:tu-ao}
Let {\rm\ref{A1}--\ref{A2}} hold.
\begin{enumerate}[\rm(i)]\setlength{\itemsep}{-1pt}
\item If $V^-(x)$ is finite for some initial state $x$, then
      \begin{align}\label{J1>0}
      & J(0;u_1,0) \ges0, \q\forall u_1\in\cU_1[0,T].
      \end{align}
\item If $V^+(x)$ is finite for some initial state $x$, then
      \begin{align}\label{J2<0}
      & J(0;0,u_2) \les0, \q\forall u_2\in\cU_2[0,T].
      \end{align}
\end{enumerate}
\end{theorem}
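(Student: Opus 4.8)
The plan is to prove (i) by contradiction and then deduce (ii) by the obvious symmetry. Suppose (i) fails, so that $J(0;\hat u_1,0)=-\d<0$ for some $\hat u_1\in\cU_1[0,T]$ and some $\d>0$. The decisive structural fact is that, since the state equation \rf{state} and the quadratic functional \rf{Q-function} contain no affine (non-homogeneous) terms, $J$ is a \emph{homogeneous} quadratic functional of the triple $(x,u_1,u_2)$. Hence, for any fixed initial state $x$ and any fixed $u_2\in\cU_2[0,T]$, the real function $\l\mapsto J(x;\l\hat u_1,u_2)$ is a quadratic polynomial in $\l$ whose leading coefficient is exactly $J(0;\hat u_1,0)=-\d$.

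Concretely, I would invoke the linearity of \rf{state} in $(x,u_1,u_2)$ to write
$$ X(\cd\,;x,\l\hat u_1,u_2)=X(\cd\,;x,0,u_2)+\l X(\cd\,;0,\hat u_1,0), $$
substitute this into \rf{Q-function}, and expand the quadratic form; this gives, for a real constant $c=c(x,u_2,\hat u_1)$ collecting the cross terms,
$$ J(x;\l\hat u_1,u_2)=-\d\l^2+c\,\l+J(x;0,u_2),\qquad \l\in\dbR. $$
Because the coefficient of $\l^2$ is negative, $J(x;\l\hat u_1,u_2)\to-\i$ as $\l\to+\i$, so $\inf_{u_1\in\cU_1[0,T]}J(x;u_1,u_2)=-\i$ for \emph{every} $u_2\in\cU_2[0,T]$. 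Taking the supremum over $u_2$ yields $V^-(x)=-\i$ for every $x$, contradicting the hypothesis that $V^-(x)$ is finite for some $x$. This proves \rf{J1>0}. For (ii) I would run the mirror-image argument: if $J(0;0,\hat u_2)>0$ for some $\hat u_2\in\cU_2[0,T]$, then $\l\mapsto J(x;u_1,\l\hat u_2)$ has positive leading coefficient, hence $\sup_{u_2}J(x;u_1,u_2)=+\i$ for every $u_1$, forcing $V^+(x)=+\i$ and contradicting finiteness; equivalently, one applies part (i) to the zero-sum game with loss functional $-J$ and the two players' roles interchanged, under which $V^+$ becomes $-V^-$ and \rf{J1>0} becomes \rf{J2<0}.

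There is no genuine obstacle here; the only points requiring a little care are that the linear-in-$\l$ cross term $c\,\l$ cannot interfere with the divergence driven by the negative $\l^2$-term, and — crucially — that the scaling estimate is uniform in $u_2$, which is precisely what permits the outer supremum defining $V^-$ to be passed through. It is worth noting that this is the game-theoretic counterpart of \autoref{lmm:SLQ-main-1}: indeed $J(0;\cd\,,0)$ is the homogeneous cost functional of a Problem (SLQ) with data $(A,B_1,C,D_1;G,Q,S_1,R_{11})$, and $J(0;0,\cd\,)$ is (up to a sign) that of a Problem (SLQ) with data $(A,B_2,C,D_2;G,Q,S_2,R_{22})$, so \rf{J1>0} and \rf{J2<0} are exactly the nonnegativity conditions that lemma attaches to finiteness of the relevant values.
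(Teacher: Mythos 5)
Your proposal is correct and is essentially the paper's own argument: the same decomposition $X(\cdot\,;x,\l\hat u_1,u_2)=X(\cdot\,;x,0,u_2)+\l X(\cdot\,;0,\hat u_1,0)$, the same expansion of $J(x;\l\hat u_1,u_2)$ as a quadratic in $\l$ with leading coefficient $J(0;\hat u_1,0)<0$, the conclusion that $\inf_{u_1}J(x;u_1,u_2)=-\i$ for every $u_2$, and the symmetric treatment of (ii). The only cosmetic difference is your remark about uniformity in $u_2$, which is not actually needed (for each fixed $u_2$ the infimum is already $-\i$, so the supremum over $u_2$ is $-\i$ without any uniform estimate).
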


\begin{proof}
To emphasize the dependence on the initial state and the controls of the two players,
we denote the solution of the state equation \rf{state} by $X_x^{u_1,u_2}$.
We now prove (i) by contradiction.
Suppose that $J(0;\bar u_1,0)<0$ for some $\bar u_1\in\cU_1[0,T]$. Then for any $u_2\in\cU_2[0,T]$,
$$ \inf_{u_1\in\cU_1[0,T]}J(x;u_1,u_2)\les \inf_{\l\in\dbR}J(x;\l\bar u_1,u_2). $$
Substituting the relation $X_x^{\l\bar u_1,u_2} = \l X_0^{\bar u_1,0} + X_x^{0,u_2}$ into
the expression of $J(x;\l\bar u_1,u_2)$, we obtain
\begin{align*}
J(x;\l\bar u_1,u_2) = \l^2J(0;\bar u_1,0) + J(x;0,u_2) + 2\l\rho(x;\bar u_1,u_2),
\end{align*}
where
\begin{align*}
\rho(x;\bar u_1,u_2) = \dbE\Bigg\{\lan GX_0^{\bar u_1,0},X_x^{0,u_2}\ran +\int_0^T\Blan\!
   \begin{pmatrix}Q   & \!S_1^\top & \!S_2^\top \\
                  S_1 & \!R_{11}   & \!R_{12}   \\
                  S_2 & \!R_{21}   & \!R_{22}   \end{pmatrix}\!
   \begin{pmatrix}X_0^{\bar u_1,0} \\ \bar u_1 \\ 0  \end{pmatrix}\!,
   \begin{pmatrix}X_x^{0,u_2} \\ 0 \\ u_2  \end{pmatrix}\! \Bran dt\Bigg\}.
\end{align*}
Since $J(0;\bar u_1,0)<0$, it follows that
$$ \inf_{u_1\in\cU_1[0,T]}J(x;u_1,u_2)\les \inf_{\l\in\dbR}J(x;\l\bar u_1,u_2) = -\i. $$
Because in the above $u_2$ is arbitrary, we obtain the contradiction
$$ V^-(x) = \sup_{u_2\in\cU_2[0,T]} \inf_{u_1\in\cU_1[0,T]} J(x;u_1,u_2) =-\i. $$
In a similar manner we can prove (ii).
\end{proof}

\autoref{thm:tu-ao} tells us that in order for both the open-loop lower value and the open-loop
upper value to be finite, the conditions \rf{J1>0} and \rf{J2<0} must hold.
We now present an example showing that \rf{J1>0} and \rf{J2<0} do not necessarily imply the
finiteness of the open-loop lower and upper values.

\begin{example}
Consider the one-dimensional state equation
$$\left\{\begin{aligned}
dX(t) &= u_1(t)dt + u_2(t) dW(t),\q t\in[0,1], \\
 X(0) &= x,
\end{aligned}\right.$$
and the quadratic functional
$$ J(x;u_1,u_2) = \dbE\bigg\{ -|X(1)|^2 + \int_0^1 \[|u_1(t)|^2-|u_2(t)|^2\] dt\bigg\}. $$
When $x=0$ and $u_2=0$,
$$ \dbE|X(1)|^2 = \dbE\lt[\int_0^1 u_1(t) dt\rt]^2 \les \dbE\int_0^1 |u_1(t)|^2 dt. $$
Thus, for every $u_1$,
$$ J(0;u_1,0) = \dbE\lt[-|X(1)|^2 + \int_0^1 |u_1(s)|^2 ds\rt] \ges0. $$
When $x=0$ and $u_1=0$, for every $u_2$,
$$ J(0;0,u_2) = -\dbE\lt[|X(1)|^2 + \int_0^1 |u_2(t)|^2 dt\rt] \les0. $$
However, $V^-(x)=-\i$ for every $x\ne0$. To see this, let $u_2$ be an arbitrary control of Player 2
and take $u_1=\l\in\dbR$. Then
$$ \dbE|X(1)|^2 = \dbE\lt[x + \l + \int_0^1 u_2(t) dW(t)\rt]^2
                = (x+\l)^2 + \dbE\int_0^1 |u_2(t)|^2 dt. $$
It follows that
$$ J(x;\l,u_2) = -(x^2+2\l x) - 2\dbE\int_0^1 |u_2(t)|^2 dt. $$
Since $x\ne0$, we see that for every $u_2$,
$$ \inf_{u_1}J(x;u_1,u_2) \les \inf_{\l}J(x;\l,u_2) = -\i, $$
and hence $V^-(x)=\sup_{u_2}\inf_{u_1}J(x;u_1,u_2)=-\i$.
\end{example}

Now we introduce a condition slightly stronger than the necessary conditions \rf{J1>0}--\rf{J2<0}
for the finiteness of the open-loop lower and upper values.
\begin{enumerate}\setlength{\itemindent}{3.7pt}
\item[{\bf\setword{(A3)}{A3}}] There exists a constant $\a>0$ such that
     \begin{alignat}{3}
     J(0;u_1,0) &\ges \a\|u_1\|^2,  \q&& \forall u_1\in\cU_1[0,T], \label{A3-tu}\\
     J(0;0,u_2) &\les -\a\|u_2\|^2, \q&& \forall u_2\in\cU_2[0,T]. \label{A3-ao}
     \end{alignat}
\end{enumerate}
It can be shown that \ref{A3} is a sufficient condition for the finiteness of the open-loop lower
and upper values at every initial state.
In fact, we shall see in the next section that under \ref{A3}, the two-person zero-sum stochastic
LQ differential game even admits an open-loop saddle point for every initial state.
Since the argument involves the Riccati equation, we defer the proof to the next section.
For the moment we want to point out that if the necessary conditions \rf{J1>0}--\rf{J2<0} hold,
then for each $\l>0$, the quadratic functional defined by
$$ J_{\l}(x;u_1,u_2) \deq J(x;u_1,u_2) +\l\dbE\int_0^T|u_1(t)|^2dt -\l\dbE\int_0^T|u_2(t)|^2dt $$
satisfies \ref{A3}. Let $V_{\l}^-(x)$ and $V_{\l}^+(x)$ be the open-loop lower and upper values
corresponding to the quadratic functional $ J_{\l}(x;u_1,u_2)$, respectively, i.e.,
\begin{align*}
V_{\l}^-(x) &\deq \sup_{u_2\in\cU_2[0,T]} \inf_{u_1\in\cU_1[0,T]} J_{\l}(x;u_1,u_2), \\
V_{\l}^+(x) &\deq \inf_{u_1\in\cU_1[0,T]} \sup_{u_2\in\cU_2[0,T]} J_{\l}(x;u_1,u_2).
\end{align*}
We have the following result.

\begin{proposition}\label{prop:Vl-V}
Let {\rm\ref{A1}--\ref{A2}} hold. If $V^\pm(x)$ are finite, then
\begin{align*}
V^-(x) \les \liminf_{\l\to0} V_{\l}^-(x) \les \limsup_{\l\to0} V_{\l}^+(x) \les V^+(x).
\end{align*}
\end{proposition}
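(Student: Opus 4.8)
The plan is to establish the three inequalities in the chain one at a time, each by an elementary comparison argument that uses only the sign of the two perturbation terms in $J_\l(x;u_1,u_2)=J(x;u_1,u_2)+\l\|u_1\|^2-\l\|u_2\|^2$. The middle inequality $\liminf_{\l\to0}V_\l^-(x)\les\limsup_{\l\to0}V_\l^+(x)$ is immediate: for each fixed $\l>0$ the general relation between lower and upper values gives $V_\l^-(x)\les V_\l^+(x)$, and applying $\liminf$ to the left-hand side and $\limsup$ to the right-hand side preserves the inequality.

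For the first inequality $V^-(x)\les\liminf_{\l\to0}V_\l^-(x)$, I would fix an arbitrary $\bar u_2\in\cU_2[0,T]$. Since $\l\|u_1\|^2\ges0$, we have $J_\l(x;u_1,\bar u_2)\ges J(x;u_1,\bar u_2)-\l\|\bar u_2\|^2$ for every $u_1$, hence $\inf_{u_1}J_\l(x;u_1,\bar u_2)\ges\inf_{u_1}J(x;u_1,\bar u_2)-\l\|\bar u_2\|^2$, and therefore $V_\l^-(x)\ges\inf_{u_1}J(x;u_1,\bar u_2)-\l\|\bar u_2\|^2$. Letting $\l\to0$ with $\|\bar u_2\|$ held fixed yields $\liminf_{\l\to0}V_\l^-(x)\ges\inf_{u_1}J(x;u_1,\bar u_2)$, and taking the supremum over $\bar u_2\in\cU_2[0,T]$ gives $\liminf_{\l\to0}V_\l^-(x)\ges V^-(x)$. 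The third inequality $\limsup_{\l\to0}V_\l^+(x)\les V^+(x)$ is obtained in the same way with the roles of the two players (and of $\sup$ and $\inf$) interchanged: fix $\bar u_1\in\cU_1[0,T]$, use $-\l\|u_2\|^2\les0$ to get $\sup_{u_2}J_\l(x;\bar u_1,u_2)\les\sup_{u_2}J(x;\bar u_1,u_2)+\l\|\bar u_1\|^2$, hence $V_\l^+(x)\les\sup_{u_2}J(x;\bar u_1,u_2)+\l\|\bar u_1\|^2$, let $\l\to0$, and take the infimum over $\bar u_1$.

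Two points deserve a line of care, neither of which is a genuine obstacle. First, the interchanges of $\inf$, $\sup$, $\liminf$, $\limsup$ above are all of the harmless kind: one fixes a single competing control \emph{before} passing to the limit in $\l$, exactly as written, so no minimax swap is ever needed. Second, I would record how the hypothesis that $V^\pm(x)$ are finite is used: the three displayed inequalities themselves hold without it, but finiteness guarantees that all intermediate quantities are real numbers. Indeed, if $V^+(x)<\i$ there is some $\bar u_1$ with $\sup_{u_2}J(x;\bar u_1,u_2)<\i$, and the estimate above then forces $V_\l^+(x)<\i$ for small $\l$; symmetrically $V_\l^-(x)>-\i$; together with $V_\l^-(x)\les V_\l^+(x)$ this pins all of $V^-(x)$, $V_\l^-(x)$, $V_\l^+(x)$, $V^+(x)$ to be finite, so no $\pm\i$ arithmetic is ever triggered.
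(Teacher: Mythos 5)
Your proof is correct and follows essentially the same route as the paper: fix a competing control for the outer player, use the sign of the $\l$-perturbation to compare $J_\l$ with $J$, pass to the limit in $\l$, and then optimize over the fixed control (the paper phrases this with an $\e$-optimal $u_2^\e$ rather than a supremum at the end, which is the same argument). Your closing remarks on the middle inequality and on finiteness are accurate and consistent with the paper's treatment.
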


\begin{proof}
The second inequality trivially holds.
So we only prove the first, the last can be treated in a similar manner.
Let $\e>0$ be an arbitrary number and choose an $u_2^{\e}\in\cU_2[0,T]$ such that
\begin{equation*}
V^-(x) = \sup_{u_2\in\cU_2[0,T]} \inf_{u_1\in\cU_1[0,T]} J(x;u_1,u_2)
    \les \inf_{u_1\in\cU_1[0,T]} J(x;u_1,u_2^\e) + \e.
\end{equation*}
With this $u_2^{\e}$ fixed, we have for any $\l>0$,
\begin{align*}
\inf_{u_1\in\cU_1[0,T]} J_\l(x;u_1,u_2^\e)
&= \inf_{u_1\in\cU_1[0,T]} \lt[J(x;u_1,u_2^\e) +\l\dbE\int_0^T|u_1(t)|^2dt -\l\dbE\int_0^T|u_2^\e(t)|^2dt\rt] \\
&\ges \inf_{u_1\in\cU_1[0,T]}J(x;u_1,u_2^\e) -\l\dbE\int_0^T|u_2^\e(t)|^2dt \\
&\ges V^-(x) -\e -\l\dbE\int_0^T|u_2^\e(t)|^2dt,
\end{align*}
from which it follows that
\begin{align*}
V_{\l}^-(x)  &=   \sup_{u_2\in\cU_2[0,T]} \inf_{u_1\in\cU_1[0,T]} J_{\l}(x;u_1,u_2)
            \ges \inf_{u_1\in\cU_1[0,T]} J_\l(x;u_1,u_2^\e) \\
&\ges  V^-(x)-\e -\l\dbE\int_0^T|u_2^\e(t)|^2dt, \q\forall \l>0.
\end{align*}
Letting $\l\to0$ yields
$$ \liminf_{\l\to0}V_{\l}^-(x) \ges V^-(x)-\e. $$
Since $\e>0$ is arbitrary, the desired result follows.
\end{proof}

%\tc{We are now ready to establish the equivalence between the finiteness of the open-loop lower
%and upper values and the existence of a finite open-loop value.}
%
%\begin{theorem}\label{thm:V-=V+}
%Let {\rm\ref{A1}--\ref{A2}} hold, and let $x\in\dbR^n$ be a given initial state.
%If the open-loop lower and upper values $V^\pm(x)$ are finite, then Problem (SLQG) admits
%a finite open-loop value at $x$, in other words, $V^+(x)=V^-(x)$.
%\end{theorem}
%
%
%\begin{proof}
%Since $V^\pm(x)$ are finite, the conditions \rf{J1>0} and \rf{J2<0} hold by \autoref{thm:tu-ao}.
%Thus, for each $\l>0$ the quadratic functional
%%
%$$ J_{\l}(x;u_1,u_2) = J(x;u_1,u_2) + \l\dbE\int_0^T|u_1(t)|^2dt - \l\dbE\int_0^T|u_2(t)|^2dt, $$
%%
%satisfies the following conditions:
%%
%\begin{alignat}{3}
%\label{uni-tu:Jl}
%J_{\l}(0;u_1,0) &\ges \l\|u_1\|^2, \q&& \forall u_1\in\cU_1[0,T], \\
%%
%\label{uni-ao:Jl}
%J_{\l}(0;0,u_2) &\les -\l\|u_2\|^2, \q&& \forall u_2\in\cU_2[0,T].
%\end{alignat}
%%
%As we shall see in the next section (\autoref{thm:main-saddle}),
%for $J_\l$, \rf{uni-tu:Jl} and \rf{uni-ao:Jl} imply the existence of an open-loop saddle point
%$(u_1^*,u_2^*)$ for the initial state $x$. Thus, by \autoref{prop:saddle-value},
%%
%$$  V_{\l}^-(x) = J_{\l}(x;u_1^*,u_2^*) = V_{\l}^+(x). $$
%%
%Letting $\l\to0$, we obtain the desired conclusion from \autoref{prop:Vl-V}.
%\end{proof}

We conclude this section with a discussion of the conditions \rf{J1>0}--\rf{J2<0} and \ref{A3}.
Consider the stochastic LQ optimal control problem with the state equation
\begin{equation*}
\left\{\begin{aligned}
dX(t) &= \big[A(t)X(t)+B_1(t)v(t)\big]dt + \big[C(t)X(t)+D_1(t)v(t)\big]dW(t), \\
 X(0) &= x,
\end{aligned}\right.
\end{equation*}
and the cost functional
\begin{equation*}
\cJ_1(x;v) \deq \dbE\Bigg\{\lan GX(T),X(T)\ran +\int_0^T\Blan\!
   \begin{pmatrix}Q(t)   & \!S_1(t)^\top \\
                  S_1(t) & \!R_{11}(t)   \end{pmatrix}\!
   \begin{pmatrix}X(t) \\ v(t) \end{pmatrix}\!,
   \begin{pmatrix}X(t) \\ v(t) \end{pmatrix}\! \Bran dt\Bigg\}.
\end{equation*}
Let us denote the above optimal control problem by Problem (SLQ)$_1$. Clearly,
$$ \cJ_1(x;u_1)=J(x;u_1,0), \q\forall u_1\in\cU_1[0,T]. $$
So the condition \rf{J1>0} is equivalent to
\begin{equation}\label{SLQ1:convex}
\cJ_1(0;v)\ges0, \q\forall v\in L_\dbF^2(0,T;\dbR^{m_1}),
\end{equation}
which means that the mapping $v\mapsto\cJ_1(0;v)$ is convex, i.e.,
\begin{align*}
\cJ_1(0;\b u+(1-\b)v) \les \b\cJ_1(0;u) + (1-\b)\cJ_1(0;v), \\
\forall u,v\in L_\dbF^2(0,T;\dbR^{m_1}),\forall \b\in[0,1].
\end{align*}
Similarly, the condition \rf{A3-tu} in \ref{A3} is equivalent to the uniform convexity of $v\mapsto\cJ_1(0;v)$.
Likewise, if we consider the stochastic LQ optimal control problem with the state equation
\begin{equation*}
\left\{\begin{aligned}
dX(t) &= \big[A(t)X(t)+B_2(t)v(t)\big]dt + \big[C(t)X(t)+D_2(t)v(t)\big]dW(t), \\
 X(0) &= x,
\end{aligned}\right.
\end{equation*}
and the cost functional
\begin{equation*}
\cJ_2(x;v) \deq -\dbE\Bigg\{\lan GX(T),X(T)\ran +\int_0^T\Blan\!
   \begin{pmatrix}Q(t)   & \!S_2(t)^\top \\
                  S_2(t) & \!R_{22}(t)   \end{pmatrix}\!
   \begin{pmatrix}X(t) \\ v(t) \end{pmatrix}\!,
   \begin{pmatrix}X(t) \\ v(t) \end{pmatrix}\! \Bran dt\Bigg\},
\end{equation*}
which we denote by Problem (SLQ)$_2$ for simplicity, then the condition \rf{J2<0} is equivalent to
\begin{equation}\label{SLQ2:convex}
\cJ_2(0;v)\ges0, \q\forall v\in L_\dbF^2(0,T;\dbR^{m_2}),
\end{equation}
and the condition \rf{A3-ao} in \ref{A3} is equivalent to the uniform convexity of $v\mapsto\cJ_2(0;v)$.
There are various sufficient conditions ensuring the (uniform) convexity of the cost functional of
a stochastic LQ optimal control problem. For results in this direction we refer the interested reader
to \cite{Sun-Li-Yong2016,Sun-Yong2020,Sun-Xiong-Yong2020}.

\section{Open-loop saddle points and Riccati equations}\label{Sec:Saddle}

The aim of this section is to show the existence of open-loop saddle points and to provide
a closed-loop representation for open-loop saddle points under the condition \ref{A3}.
The associated Riccati equation plays a crucial role in establishing these results, whose
solvability constitutes the most difficult part of this section.
%The crucial step is to establish the solvability of the associated Riccati equation, which
%constitutes the most difficult part of this section.

\ms

We begin by recalling the notion of open-loop saddle points and introducing the Riccati equation.

\begin{definition}
An {\it open-loop saddle point} for the initial state $x$ is a pair $(u^*_1,u^*_2)\in\cU_1[0,T]\times\cU_2[0,T]$
such that the following inequalities hold:
\begin{equation*}
J(x;u^*_1,u_2) \les J(x;u^*_1,u^*_2) \les J(x;u_1,u^*_2), \q\forall (u_1,u_2)\in\cU_1[0,T]\times\cU_2[0,T].
\end{equation*}
\end{definition}

The Riccati equation associated with the two-person zero-sum stochastic LQ differential game is
a nonlinear ordinary differential equation of the following form:
\begin{equation}\label{Ric}\left\{\begin{aligned}
 & \dot P + PA + A^\top P + C^\top PC + Q\\
 & \hp{\dot P} -(PB+C^\top PD+S^\top)(R+D^\top PD)^{-1}(B^\top P+D^\top PC+S)=0, \\
 & P(T)=G,
\end{aligned}\right.\end{equation}
where we have adopted the notation
$$ B = (B_1, B_2), \q D = (D_1, D_2), \q S = \begin{pmatrix}S_1 \\ S_2 \end{pmatrix},
\q R = \begin{pmatrix}R_{11} & R_{12} \\
                      R_{21} & R_{22} \end{pmatrix}, $$
and as before, the variable $t$ has been suppressed for convenience. Note that
\begin{align*}
& R+D^\top PD = \begin{pmatrix}R_{11} + D_1^\top PD_1 & R_{12} + D_1^\top PD_2 \\[1mm]
                               R_{21} + D_2^\top PD_1 & R_{22} + D_2^\top PD_2 \end{pmatrix}, \\
& B^\top P+D^\top PC+S = \begin{pmatrix}B_1^\top P + D_1^\top PC + S_1 \\[1mm]
                                        B_2^\top P + D_2^\top PC + S_2 \end{pmatrix}.
\end{align*}

\begin{definition}
A {\it strongly regular solution} to the Riccati equation \rf{Ric} over $[0,T]$ is an absolutely
continuous function $P:[0,T]\to\dbS^n$ that possesses the following properties:
\begin{enumerate}[(i)]\setlength{\itemsep}{-1pt}
\item For $i=1,2$, $(-1)^{i+1}(R_{ii} + D_i^\top PD_i) \gg0$.
\item $P$ satisfies \rf{Ric} almost everywhere on $[0,T]$.
\end{enumerate}
\end{definition}

Now we state the main results of this section. The proofs will be given shortly after some preparations.

\begin{theorem}\label{thm:main-Ric}
Let {\rm\ref{A1}--\ref{A3}} hold.
Then the Riccati equation \rf{Ric} has a strongly regular solution over $[0,T]$.
\end{theorem}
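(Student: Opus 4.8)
The strategy is to build the strongly regular solution $P$ by relating the Riccati equation \rf{Ric} to the Riccati equations of the two auxiliary problems (SLQ)$_1$ and (SLQ)$_2$, together with a local existence / continuation argument. Under \ref{A3}, the map $v\mapsto\cJ_1(0;v)$ is uniformly convex (this is the remark following \autoref{prop:Vl-V}), so by \autoref{lmm:SLQ-main-2} the Riccati equation for Problem (SLQ)$_1$ has a solution $P_1\in C([0,T];\dbS^n)$ with $R_{11}+D_1^\top P_1D_1\gg0$; likewise $v\mapsto\cJ_2(0;v)$ is uniformly convex, giving $P_2\in C([0,T];\dbS^n)$ with $-(R_{22}+D_2^\top P_2D_2)\gg0$. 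These two bound the region in which one expects $P$ to live. The substantive part is a \emph{local} existence statement: near $t=T$, where $P(T)=G$, the matrix $R+D^\top PD$ is a small perturbation of $R(T)+D(T)^\top GD(T)$, whose diagonal blocks satisfy $(-1)^{i+1}(R_{ii}+D_i^\top GD_i)\gg0$ at $t=T$ — and I would first check that \ref{A3} forces this invertibility of the full block matrix $R+D^\top PD$ at the terminal time by a Schur-complement argument, so that the right-hand side of \rf{Ric} is well-defined and locally Lipschitz in $P$. Standard ODE theory (Picard–Lindelöf / Carathéodory, since the coefficients are only $L^\infty$) then yields a maximal solution on some $(\tau,T]$ with the invertibility preserved.

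**Continuation.** The heart of the argument is to show the maximal interval is all of $[0,T]$, i.e. that the solution cannot blow up and the condition $(-1)^{i+1}(R_{ii}+D_i^\top PD_i)\gg0$ cannot degenerate as $t$ decreases. This is where I expect to spend most of the effort. The idea is a comparison/monotonicity argument: on any subinterval $[s,T]$ on which the strongly regular solution exists, one shows that $P$ coincides with the value-function matrix of the game restricted to $[s,T]$, and simultaneously that $J(0;u_1,0)\ges\a\|u_1\|^2$ on $[s,T]$ and $J(0;0,u_2)\les-\a\|u_2\|^2$ on $[s,T]$ are inherited from \ref{A3} on $[0,T]$. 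Uniform convexity/concavity on the full interval restricts to the same on every subinterval, and via the (SLQ)$_1$/(SLQ)$_2$ machinery this pins down uniform (in $s$) bounds $P_1(t)\les P(t)\les P_2(t)$ and hence an a priori $L^\infty$ bound on $P$, ruling out finite-time blow-up. To control the invertibility of the diagonal blocks uniformly, I would complete the square in $J_{[s,T]}(0;\cdot,\cdot)$: fixing $u_2=0$ and minimising over $u_1$ using the candidate closed-loop form associated with $P$, the uniform convexity constant $\a$ translates (via a computation like the one behind \autoref{lmm:SLQ-main-2}) into a uniform lower bound $R_{11}+D_1^\top PD_1\ges\a I$, and symmetrically $R_{22}+D_2^\top PD_2\les-\a I$, on the whole interval of existence. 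Since these bounds are independent of the left endpoint, the solution extends to $t=0$.

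**The main obstacle.** The genuinely delicate point is establishing the invertibility and the sign conditions on the \emph{full} $2\times2$ block matrix $R+D^\top PD$ — not just its diagonal blocks — along the entire existence interval, because \rf{Ric} involves $(R+D^\top PD)^{-1}$ globally. Here I would use the Schur complement: if $R_{11}+D_1^\top PD_1\gg0$ and its Schur complement $(R_{22}+D_2^\top PD_2)-(R_{21}+D_2^\top PD_1)(R_{11}+D_1^\top PD_1)^{-1}(R_{12}+D_1^\top PD_2)\ll0$, then $R+D^\top PD$ is invertible with the right inertia; and the Schur complement's sign should follow from the uniform concavity in $u_2$ \emph{after} the inner minimisation over $u_1$, i.e. from the value function of the game-with-$u_1$-optimised-out being uniformly concave in $u_2$. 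Making this "minimise-then-maximise" reduction rigorous at the level of the Riccati coefficients — showing that the min-over-$u_1$ operation is exactly the Schur reduction of the block matrix — is the crux, and it is also what ties \ref{A3} (a convex–concave saddle condition) to strong regularity (a Schur-complement sign condition). Once that equivalence is in hand, local existence plus the a priori bounds above close the argument, and uniqueness of the strongly regular solution follows from the uniqueness in \autoref{lmm:SLQ-main-2} applied to the reduced problems.
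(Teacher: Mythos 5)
Your proposal follows essentially the same route as the paper's proof: under \ref{A3}, obtain the solutions $P_1$ and $P_2$ of the Riccati equations for Problems (SLQ)$_1$ and (SLQ)$_2$ from \autoref{lmm:SLQ-main-2}, prove local existence near the terminal time by a Lipschitz/Picard argument, and use the two-sided comparison $P_1\les P\les P_2$ as an a priori bound to continue the strongly regular solution to all of $[0,T]$. One remark: the step you single out as the crux --- the sign of the Schur complement of the full block matrix $R+D^\top PD$ --- needs no ``minimize-then-maximize'' reduction at the level of the coefficients, because once the comparison gives $R_{11}+D_1^\top PD_1\ges R_{11}+D_1^\top P_1D_1\gg0$ and $R_{22}+D_2^\top PD_2\les R_{22}+D_2^\top P_2D_2\ll0$ (by congruence, since $D_1^\top(P-P_1)D_1\ges0$ and $D_2^\top(P-P_2)D_2\les0$), the Schur complement of the $(1,1)$ block is a negative definite matrix minus a positive semidefinite one and is therefore automatically $\ll0$, which is exactly the content of the paper's \autoref{lmm:matrix}.
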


\begin{theorem}\label{thm:main-saddle}
Let {\rm\ref{A1}--\ref{A3}} hold, and let $P\in C([0,T];\dbS^n)$ be the strongly regular
solution to the Riccati equation \rf{Ric} over $[0,T]$. Then
\begin{enumerate}[\rm(i)]\setlength{\itemsep}{-1pt}
\item for every initial state $x$, a unique open-loop saddle point exists;
\item with the notation
      $$ \Th = -(R+D^\top PD)^{-1}(B^\top P+D^\top PC+S), $$
      the open-loop saddle point $u^*=\begin{pmatrix}u_1^*\\u_2^*\end{pmatrix}$ for the initial state $x$
      has the following closed-loop representation:
      \begin{align}\label{u:cloop-rep}
       u^*(t) = \Th(t)X^*(t), \q t\in[0,T],
      \end{align}
      where $X^*$ is the solution to the closed-loop system
      \begin{equation}\label{state:cloop}\left\{\begin{aligned}
      dX^*(t) &= [A(t)+B(t)\Th(t)]X^*(t)dt + [C(t)+D(t)\Th(t)]X^*(t)dW(t), \\
       X^*(0) &= x.
      \end{aligned}\right.\end{equation}
      Moreover, $J(x;u_1^*,u_2^*)=\lan P(0)x,x\ran$.
\end{enumerate}
\end{theorem}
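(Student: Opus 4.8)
The plan is to reduce the game to the two auxiliary stochastic LQ optimal control problems Problem (SLQ)$_1$ and Problem (SLQ)$_2$ introduced at the end of Section \ref{Sec:Value}, and then use the closed-loop representation from \autoref{lmm:SLQ-main-2} together with a completion-of-squares identity driven by the strongly regular solution $P$ of \rf{Ric}. First I would establish the basic quadratic decomposition: for fixed $x$, writing $X_x^{u_1,u_2}$ for the state and exploiting linearity, $J(x;u_1+v_1,u_2)-J(x;u_1,u_2)$ is affine-plus-quadratic in $v_1$, and under \ref{A3} (via \rf{SLQ1:convex} and its uniform version) the map $v_1\mapsto J(x;u_1+v_1,u_2)-J(x;u_1,u_2)-\langle\text{linear term},v_1\rangle$ is uniformly convex with the \emph{same} constant $\a$, uniformly in $x$ and $u_2$. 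Hence \autoref{lmm:SLQ-main-2}(i) applies to Problem (SLQ)$_1$ (with a shifted, $u_2$-dependent affine term in the state dynamics and cost), giving for each $(x,u_2)$ a unique minimizer in $u_1$; symmetrically, \rf{A3-ao} gives for each $(x,u_1)$ a unique maximizer in $u_2$ for $J$, i.e. a unique minimizer for $\cJ_2$. This simultaneous strict convexity/concavity is exactly the structure that forces a saddle point to exist and be unique once one produces a candidate.

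Next I would produce the candidate via the Riccati equation. With $\Th=-(R+D^\top PD)^{-1}(B^\top P+D^\top PC+S)$ and $X^*$ the solution of the closed-loop system \rf{state:cloop} (well posed since $A+B\Th$, $C+D\Th$ are in $L^\i$ because $(R+D^\top PD)^{-1}$ is bounded, using property (i) of strong regularity), set $u^*=\Th X^*$. Applying It\^o's formula to $t\mapsto\langle P(t)X(t),X(t)\rangle$ along an arbitrary admissible pair $(u_1,u_2)$ and using the Riccati equation \rf{Ric}, I expect the standard completion-of-squares identity
\begin{align*}
J(x;u_1,u_2)=\langle P(0)x,x\rangle+\dbE\int_0^T\Big\langle (R+D^\top PD)\big(u-\Th X\big),\,u-\Th X\Big\rangle\,dt,
\end{align*}
where $u=(u_1^\top,u_2^\top)^\top$ and $X=X_x^{u_1,u_2}$. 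Because $R+D^\top PD$ has the block structure displayed after \rf{Ric} with $R_{11}+D_1^\top PD_1\gg0$ and $R_{22}+D_2^\top PD_2\ll0$, the integrand is a difference of a positive-definite quadratic form in $u_1-\Th_1 X$ and a positive-definite quadratic form in $u_2-\Th_2 X$ — \emph{but the two are coupled through the off-diagonal blocks and through $X$**.** So the naive "fix $u_1^*$, vary $u_2$" argument does not immediately give the saddle inequalities; this coupling is the main obstacle.

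To get around it I would not try to read the saddle inequalities off the global identity directly. Instead, fix $u_1=u_1^*=\Th_1 X^*$ and consider the map $u_2\mapsto J(x;u_1^*,u_2)$: this is the (negative of the) cost of Problem (SLQ)$_2$ with the state driven by the fixed input $u_1^*$, it is uniformly concave by \rf{A3-ao}, so it has a unique maximizer characterized by a (linear FBSDE / Riccati) optimality system; I would verify that $u_2^*=\Th_2 X^*$ together with the adjoint process $Y=PX^*$ (plus the corresponding $Z$) solves that optimality system, whence $u_2^*$ is the maximizer, i.e. $J(x;u_1^*,u_2)\le J(x;u_1^*,u_2^*)$ for all $u_2$. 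Symmetrically, fixing $u_2=u_2^*$, uniform convexity from \rf{A3-tu} plus the same adjoint $Y=PX^*$ identifies $u_1^*=\Th_1 X^*$ as the unique minimizer, giving $J(x;u_1^*,u_2^*)\le J(x;u_1,u_2^*)$. This establishes that $(u_1^*,u_2^*)=(\Th_1 X^*,\Th_2 X^*)$ is an open-loop saddle point, proving the closed-loop representation \rf{u:cloop-rep}; evaluating the completion-of-squares identity at $(u_1^*,u_2^*)$ makes the integral vanish and yields $J(x;u_1^*,u_2^*)=\langle P(0)x,x\rangle$. Finally, uniqueness in (i): if $(\bar u_1,\bar u_2)$ is another saddle point, then $\bar u_1$ minimizes $u_1\mapsto J(x;u_1,\bar u_2)$ and $\bar u_2$ maximizes $u_2\mapsto J(x;\bar u_1,u_2)$; standard zero-sum arguments give $J(x;\bar u_1,\bar u_2)=J(x;u_1^*,u_2^*)$, and then comparing with the strictly convex problem "minimize $J(x;\cdot,u_2^*)$" (which has $u_1^*$ as its unique minimizer, with $J(x;\bar u_1,u_2^*)\le J(x;\bar u_1,\bar u_2)=J(x;u_1^*,u_2^*)$ forcing $\bar u_1=u_1^*$) and symmetrically $\bar u_2=u_2^*$. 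The delicate points to watch are the well-posedness of the shifted SLQ problems with $L^2_\dbF$ (not necessarily bounded) affine inputs, and checking that the adjoint $Y=PX^*$, $Z=PC X^*+\cdots$ genuinely solves both optimality systems simultaneously — that is where the Riccati equation \rf{Ric} is used in its full force.
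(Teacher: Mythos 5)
Your proposal is correct, and its computational core --- checking that the adjoint pair $Y^*=PX^*$, $Z^*=P(CX^*+Du^*)$ satisfies the backward equation and the stationarity identity $B^\top Y^*+D^\top Z^*+SX^*+Ru^*=0$ --- is exactly the verification the paper performs. The surrounding logic differs, though. The paper imports \autoref{lmm:Sun2014} (from Sun--Yong 2014) as a complete characterization of open-loop saddle points, so that once the stationarity condition is verified for the candidate, both existence and the closed-loop representation follow at one stroke; the value $\lan P(0)x,x\ran$ is then obtained by a separate integration by parts rather than by your completion-of-squares identity (which is nevertheless valid: the Riccati equation turns the integrand into $\lan (R+D^\top PD)(u-\Th X),u-\Th X\ran$, and your observation that the indefinite block structure prevents reading the saddle inequalities directly off this identity is exactly right --- this is why some form of per-player argument or characterization lemma is needed). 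You instead rebuild the sufficiency of the stationarity condition from the per-player structure: freezing $u_1^*$ makes player 2's problem a uniformly concave affine SLQ problem whose first-order condition is precisely the second block row of the paper's stationarity identity, and symmetrically for player 1. This buys a self-contained argument that does not rely on the external characterization lemma, at the cost of having to justify well-posedness of the shifted SLQ problems with merely square-integrable affine inputs (routine, but you correctly flag it). The uniqueness arguments also diverge: the paper exploits linearity (the difference of two saddle points is a saddle point for $x=0$) together with \ref{A3} to force the difference to vanish, while you use the classical equal-value property of saddle points combined with strict convexity of the marginal problem $u_1\mapsto J(x;u_1,u_2^*)$; both are sound, and yours is arguably more elementary in that it does not need the linearity of the saddle-point map, only the uniform convexity/concavity supplied by \ref{A3}.
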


It is worth pointing out that the converse of \autoref{thm:main-Ric} does not hold in general.
In other words, the existence of a strongly regular solution to the Riccati equation \rf{Ric}
over $[0,T]$ does not necessarily imply the condition \ref{A3}.
In fact, the existence of a strongly regular solution does not even imply the weaker conditions
\rf{J1>0}--\rf{J2<0}. Here is an example.

\begin{example}\label{ex:PnotA3}
Consider the one-dimensional state equation
$$\left\{\begin{aligned}
dX(t) &= [u_1(t)dt + u_2(t)] dt,\q t\in[0,1], \\
 X(0) &= x,
\end{aligned}\right.$$
and the quadratic functional
$$ J(x;u_1,u_2) = \dbE\lt\{ -2|X(1)|^2 + \int_0^1 \lt[|u_1(t)|^2-{2\over3}|u_2(t)|^2\rt] dt\rt\}. $$
The Riccati equation associated with this game reads
$$\left\{\begin{aligned}
 & \dot P -P(1,1)\begin{pmatrix*}[r]1&0\\0&-{2\over3}\end{pmatrix*}^{-1}\begin{pmatrix}1\\1\end{pmatrix}P=0, \\
 & P(1)=-2,
\end{aligned}\right.$$
which simplifies to
\begin{equation}\label{ex4.5:Ric}\left\{\begin{aligned}
 & \dot P(t) =-{1\over2}P(t)^2, \\
 & P(1)=-2.
\end{aligned}\right.\end{equation}
It is straightforward to verify that
$$ P(t) = {2\over t-2}, \q t\in[0,1] $$
is the strongly regular solution of \rf{ex4.5:Ric} over $[0,1]$. However, for $u_1(t)\equiv\l\ne0$, we have
$$ J(0;u_1,0) = -2\lt(\int_0^1\l dt\rt)^2 + \int_0^1\l^2 dt = -\l^2<0. $$
\end{example}

The preparation for the proof of \autoref{thm:main-Ric} starts with the following lemma,
whose proof is straightforward.

\begin{lemma}\label{lmm:matrix}
For $M\in\dbS^m$, $L\in\dbR^{m\times n}$, $N\in\dbS^n$,
if $M$ and $\F\deq N-L^\top M^{-1}L$ are invertible, then $\begin{pmatrix}M&L \\ L^\top&N\end{pmatrix}$
is also invertible and
$$ \begin{pmatrix}M & L \\ L^\top & N\end{pmatrix}^{-1}
 = \begin{pmatrix}M^{-1} + (M^{-1}L) \F^{-1} (M^{-1}L)^\top & -(M^{-1}L) \F^{-1} \\
                                  -\F^{-1} (M^{-1}L)^\top & \F^{-1}          \end{pmatrix}. $$
Moreover, for every $\rho\in\dbR^{m\times k}$ and $\xi\in\dbR^{n\times k}$,
$$ (\rho^\top, \xi^\top)\begin{pmatrix}M&L \\ L^\top&N\end{pmatrix}^{-1}\begin{pmatrix}\rho \\ \xi\end{pmatrix}
 = \rho^\top M^{-1}\rho + \big(L^\top M^{-1}\rho -\xi\big)^\top\F^{-1}\big(L^\top M^{-1}\rho -\xi\big). $$
In particular, if $M$ is positive definite and $N$ is negative definite, then
$$ (\rho^\top, \xi^\top)\begin{pmatrix}M&L \\ L^\top&N\end{pmatrix}^{-1}\begin{pmatrix}\rho\\ \xi\end{pmatrix}
\les \rho^\top M^{-1}\rho, \q\forall \rho\in\dbR^{m\times k},~\xi\in\dbR^{n\times k}. $$
\end{lemma}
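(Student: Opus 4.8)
The plan is to recognize this lemma as the Schur-complement identity and to prove it via a block $LDU$ factorization. First I would write
$$\begin{pmatrix}M&L\\ L^\top&N\end{pmatrix}
=\begin{pmatrix}I&0\\ L^\top M^{-1}&I\end{pmatrix}
 \begin{pmatrix}M&0\\ 0&\F\end{pmatrix}
 \begin{pmatrix}I&M^{-1}L\\ 0&I\end{pmatrix},$$
which is checked by multiplying out the three factors and using $\F=N-L^\top M^{-1}L$. The two unit-triangular factors are always invertible (their inverses are obtained by negating the off-diagonal block), while the middle block-diagonal factor is invertible precisely because $M$ and $\F$ are; hence the product is invertible. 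Inverting the three factors in reverse order, $(ABC)^{-1}=C^{-1}B^{-1}A^{-1}$, and performing the resulting block multiplication produces exactly the asserted formula for the inverse, where along the way one uses that $M^{-1}$ is symmetric so that $(M^{-1}L)^\top=L^\top M^{-1}$.

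For the quadratic-form identity I would apply the factorization directly: placing $\binom{\rho}{\xi}$ on the right and its transpose on the left, the inverse of the product rearranges the form into $\rho^\top M^{-1}\rho+\eta^\top\F^{-1}\eta$ after the change of variable $\eta\deq\xi-L^\top M^{-1}\rho$ induced by the triangular factors, which is precisely completing the square and yields
$$(\rho^\top,\xi^\top)\begin{pmatrix}M&L\\ L^\top&N\end{pmatrix}^{-1}\begin{pmatrix}\rho\\ \xi\end{pmatrix}
=\rho^\top M^{-1}\rho+\big(L^\top M^{-1}\rho-\xi\big)^\top\F^{-1}\big(L^\top M^{-1}\rho-\xi\big);$$
alternatively the same identity follows by expanding the explicit inverse block by block. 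The final ``in particular'' assertion is then a sign count: if $M$ is positive definite then $\rho^\top M^{-1}\rho\ges0$ (as an element of $\dbS^k$), and if $N$ is negative definite then $\F=N-L^\top M^{-1}L$ is negative definite, being a negative definite matrix plus the negative semidefinite matrix $-L^\top M^{-1}L$; hence $\F^{-1}$ is negative definite, the term $\big(L^\top M^{-1}\rho-\xi\big)^\top\F^{-1}\big(L^\top M^{-1}\rho-\xi\big)$ is $\les0$ in $\dbS^k$, and the sum is $\les\rho^\top M^{-1}\rho$.

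There is essentially no genuine obstacle here --- this is a textbook block-matrix computation --- so the only points requiring care are bookkeeping: $\rho$ and $\xi$ are matrices rather than vectors, so all quadratic expressions are $\dbS^k$-valued and ``$\les$'' denotes the Loewner order on $\dbS^k$, and one must keep the symmetry of $M^{-1}$ and $\F$ in view when transposing products. Both are routine, which is why the statement is flagged as having a straightforward proof.
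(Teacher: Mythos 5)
Your proof is correct and complete; the paper itself omits the proof (declaring it ``straightforward''), and the block $LDU$/Schur-complement factorization you use is exactly the standard argument the author intends. All three assertions — invertibility with the explicit inverse, the completed-square form of the quadratic expression, and the Loewner-order inequality via $\F=N-L^\top M^{-1}L<0$ — follow correctly from your factorization.
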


We next make some observations. Suppose that \ref{A3} holds. Then the cost functional $\cJ_1(x;v)$
of Problem (SLQ)$_1$ introduced in the preceding section satisfies
$$ \cJ_1(0;v) \ges \a\|v\|^2, \q\forall v\in  L_\dbF^2(0,T;\dbR^{m_1}). $$
Thus, by \autoref{lmm:SLQ-main-2}(ii), the Riccati equation
\begin{equation}\label{Ric:1}\left\{\begin{aligned}
 & \dot P_1 + P_1A + A^\top P_1 + C^\top P_1C + Q\\
 & \hp{\dot P_1} -(P_1B_1+C^\top P_1D_1+S_1^\top)(R_{11}+D_1^\top P_1D_1)^{-1}(B_1^\top P_1+D_1^\top P_1C+S_1)=0, \\
 & P_1(T)=G
\end{aligned}\right.\end{equation}
admits a unique solution $P_1\in C([0,T];\dbS^n)$ satisfying
\begin{equation}\label{R11+D1P1D1>>0}
  R_{11}+D_1^\top P_1D_1\gg0.
\end{equation}
Likewise, the Riccati equation
\begin{equation}\label{Ric:2}\left\{\begin{aligned}
&\dot P_2 + P_2A + A^\top P_2 + C^\top P_2C + Q\\
&\hp{\dot P_2} -(P_2B_2+C^\top P_2D_2+S_2^\top)(R_{22}+D_2^\top P_2D_2)^{-1}(B_2^\top P_2+D_2^\top P_2C+S_2)=0, \\
& P_2(T)=G
\end{aligned}\right.\end{equation}
admits a unique solution $P_2\in C([0,T];\dbS^n)$ satisfying
\begin{equation}\label{R22+D2P2D2<<0}
  R_{22}+D_2^\top P_2D_2 \ll 0.
\end{equation}
We have the following comparison result.

\begin{proposition}\label{prop:comparison}
Let {\rm\ref{A1}--\ref{A3}} hold.
Suppose that $P$ is a strongly regular solution of \rf{Ric} over some interval $[\th,\t]\subseteq[0,T]$
with terminal condition replaced by $P(\t)=H\in\dbS^n$. If $P_1(\t)\les H\les P_2(\t)$, then
$$ P_1(t) \les P(t) \les P_2(t), \q\forall t\in[\th,\t]. $$
\end{proposition}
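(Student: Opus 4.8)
The plan is to prove the two inequalities $P_1\les P$ and $P\les P_2$ separately; by symmetry (swapping the roles of Player~1 and Player~2, which interchanges $P_1$ and $P_2$ and reverses signs), it suffices to treat $P_1\les P$ carefully and then indicate the obvious modification. The idea is to set $\Delta\deq P-P_1$ on $[\th,\t]$, observe that $\Delta(\t)=H-P_1(\t)\ges0$ by hypothesis, and derive a linear matrix differential inequality (a Riccati-type comparison equation) satisfied by $\Delta$, from which $\Delta\ges0$ on all of $[\th,\t]$ follows by a Gronwall-type / linear ODE comparison argument.

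First I would subtract the two Riccati equations. Write the $\Theta$-like feedback gains $\Theta_1\deq-(R_{11}+D_1^\top P_1D_1)^{-1}(B_1^\top P_1+D_1^\top P_1C+S_1)$ associated with $P_1$, and similarly for $P$ (restricted to the first block). The standard algebraic manipulation of Riccati equations shows that $\dot\Delta$ can be written as
\begin{equation*}
-\dot\Delta = \Delta(A+B_1\Theta_1) + (A+B_1\Theta_1)^\top\Delta + (C+D_1\Theta_1)^\top\Delta(C+D_1\Theta_1) + \mathcal{R},
\end{equation*}
where $\mathcal{R}$ is a remainder term that is a perfect "square" of the form $(\Theta-\Theta_1)^\top(R_{11}+D_1^\top PD_1)(\Theta-\Theta_1)$ plus a correction coming from the fact that the Riccati equation for $P$ involves the full matrices $(B,D,S,R)$ rather than just the $(B_1,D_1,S_1,R_{11})$ blocks. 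The extra block, after using \autoref{lmm:matrix} to expand $(R+D^\top PD)^{-1}$ in terms of the $(1,1)$-block $M=R_{11}+D_1^\top PD_1$ and the Schur complement $\F=(R_{22}+D_2^\top PD_2)-(\cdots)M^{-1}(\cdots)$, contributes a term of the form $(\text{something})^\top\F^{-1}(\text{something})$. Here the strong regularity of $P$ gives $M\gg0$ and $R_{22}+D_2^\top PD_2\ll0$; since $\F$ is a negative-definite perturbation of a negative-definite matrix, $\F\ll0$, so this contribution is $\les0$. Combining, $\mathcal{R}\ges0$: it is a sum of a positive-semidefinite square term and a nonpositive quadratic form subtracted from... — more precisely, after careful bookkeeping, $\mathcal{R}$ splits as $(\Theta_1^{(1)}-\Theta^{(1)})^\top M(\Theta_1^{(1)}-\Theta^{(1)}) - (\text{neg. definite form})$, and using \autoref{lmm:matrix}'s last inequality one checks $\mathcal{R}\ges0$.

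With $\mathcal R\ges0$ in hand, the inequality $-\dot\Delta\ges\Delta\hat A+\hat A^\top\Delta+\hat C^\top\Delta\hat C$ where $\hat A=A+B_1\Theta_1$, $\hat C=C+D_1\Theta_1$ are bounded, together with $\Delta(\t)\ges0$, yields $\Delta(t)\ges0$ for $t\in[\th,\t]$ by the standard backward linear comparison principle: introduce the transition matrix $\Phi$ for $d\Phi=\hat A\Phi\,ds$ (or use the representation $\Delta(t)=\dbE[\Phi(t,\t)^\top\Delta(\t)\Phi(t,\t)]+\dbE\int_t^\t\Phi(t,s)^\top\mathcal R(s)\Phi(t,s)\,ds$ for the auxiliary SDE with drift $\hat A$ and diffusion $\hat C$), both terms on the right being positive semidefinite. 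This gives $P_1\les P$ on $[\th,\t]$. The inequality $P\les P_2$ follows by the same argument applied to $P_2-P$, this time using that the $(2,2)$-block of the relevant decomposition is negative definite and that $H\les P_2(\t)$; equivalently, apply the already-proved inequality to the "mirror" game.

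\textbf{Main obstacle.} The delicate point is the algebraic identity for $\dot\Delta$ and the verification that the remainder $\mathcal R$ is positive semidefinite. The subtraction of the two Riccati equations is not a clean "completion of squares" because $P$'s equation carries the \emph{full} $2\times2$ block structure while $P_1$'s carries only the $(1,1)$ block; reconciling them forces one to invoke \autoref{lmm:matrix} to peel off the $(1,1)$-block of $(R+D^\top PD)^{-1}$ and to control the Schur-complement cross term, whose sign is governed precisely by the strong-regularity conditions $(-1)^{i+1}(R_{ii}+D_i^\top PD_i)\gg0$. Getting the signs and the groupings exactly right in that computation — so that everything collects into $(\Theta-\Theta_1)^\top M(\Theta-\Theta_1)$ plus a manifestly nonnegative leftover — is the crux; once that is done, the comparison principle is routine.
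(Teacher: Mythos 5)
Your overall strategy is sound and, in its first essential step, coincides with the paper's: both arguments hinge on \autoref{lmm:matrix} to discard the Schur-complement contribution of the $(2,2)$ block (legitimate because strong regularity gives $R_{11}+D_1^\top PD_1\gg0$ and $R_{22}+D_2^\top PD_2\ll0$, hence $\F\ll0$), so that $P$ becomes a subsolution of the Riccati equation \rf{Ric:1} of Problem (SLQ)$_1$. The gap is in the crux computation you yourself flag. If you anchor the linear part at $\Theta_1\deq-(R_{11}+D_1^\top P_1D_1)^{-1}(B_1^\top P_1+D_1^\top P_1C+S_1)$, the exact remainder is
\begin{equation*}
\mathcal R \;=\; -\,(\Theta_1-\ti\Theta)^\top(R_{11}+D_1^\top PD_1)(\Theta_1-\ti\Theta)\;-\;W^\top\F^{-1}W,
\qq \ti\Theta\deq-(R_{11}+D_1^\top PD_1)^{-1}(B_1^\top P+D_1^\top PC+S_1),
\end{equation*}
where $W$ is the Schur-complement cross term from \autoref{lmm:matrix}. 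The second term is indeed $\ges0$ since $\F\ll0$, but the completed square enters with weight $-(R_{11}+D_1^\top PD_1)\ll0$, not $+$; so with your grouping $\mathcal R\ges0$ does not follow (and is false in general), and the linear comparison principle cannot be applied. The fix is classical but essential: complete the square around $\ti\Theta$ (the gain of the \emph{subsolution} $P$), i.e.\ take $\hat A=A+B_1\ti\Theta$, $\hat C=C+D_1\ti\Theta$; the remainder then becomes $(\ti\Theta-\Theta_1)^\top(R_{11}+D_1^\top P_1D_1)(\ti\Theta-\Theta_1)-W^\top\F^{-1}W\ges0$, and your transition-matrix representation finishes the proof.

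For comparison, the paper sidesteps this sign issue by not linearizing at all: it observes that $\Pi=P-P_1$ itself satisfies a Riccati equation of LQ-control type with data $\h Q\ges0$, $\h R=R_{11}+D_1^\top P_1D_1\gg0$ and $\Pi(\t)\ges0$, and then invokes \autoref{lmm:SLQ-main-2}(ii) to conclude $\Pi\ges0$. Your route, once the anchoring is corrected, is somewhat more self-contained (it does not need the nonnegativity statement of \autoref{lmm:SLQ-main-2}), but as written the decisive identity has the wrong sign on the square.
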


\begin{proof}
First we introduce the following notation: For a matrix $K\in\dbS^n$,
\begin{eqnarray*}
&\begin{pmatrix}\cM(t,K)      & \cL(t,K) \\[1mm]
                \cL(t,K)^\top & \cN(t,K) \end{pmatrix}
 \deq \begin{pmatrix}R_{11}(t) + D_1(t)^\top KD_1(t) & R_{12}(t) + D_1(t)^\top KD_2(t) \\[1mm]
                     R_{21}(t) + D_2(t)^\top KD_1(t) & R_{22}(t) + D_2(t)^\top KD_2(t) \end{pmatrix}, \\
&\cR(t,K) \deq \begin{pmatrix}\cM(t,K)      & \cL(t,K) \\[1mm]
                              \cL(t,K)^\top & \cN(t,K) \end{pmatrix},  \\
&\cS_i(t,K) \deq B_i(t)^\top K + D_i(t)^\top KC(t) + S_i(t), \q i=1,2, \\
&\cS(t,K) \deq \begin{pmatrix}\cS_1(t,K) \\[1mm]
                              \cS_2(t,K) \end{pmatrix} =  B(t)^\top K + D(t)^\top KC(t) + S(t).
\end{eqnarray*}
By \autoref{lmm:matrix}, we have for $t\in[\th,\t]$,
\begin{align*}
& [(PB+C^\top PD+S^\top)(R+D^\top PD)^{-1}(B^\top P+D^\top PC+S)](t) \\
&\q\les \cS_1(t,P(t))^\top\cM(t,P(t))^{-1}\cS_1(t,P(t)).
\end{align*}
Now let
$$ \Pi(t)=P(t)-P_1(t), \q t\in[\th,\t]. $$
Then $\Pi(\t)\ges0$ and
\begin{align*}
\dot\Pi(t)
&\les -\,[\Pi(t)A(t) + A(t)^\top\Pi(t) + C(t)^\top\Pi(t)C(t)] \\
&~\hp{=} + \cS_1(t,P(t))^\top\cM(t,P(t))^{-1}\cS_1(t,P(t)).
\end{align*}
With the notation
$$ \h S \deq B_1^\top P_1+D_1^\top P_1C+S_1, \q \h R= R_{11}+D_1^\top P_1D_1, $$
we can rewrite $\cS_1(t,P(t))^\top\cM(t,P(t))^{-1}\cS_1(t,P(t))$ as
\begin{align*}
& \cS_1(t,P(t))^\top\cM(t,P(t))^{-1}\cS_1(t,P(t)) \\
&\q=\big[(PB_1+C^\top PD_1+S_1^\top)(R_{11}+D_1^\top PD_1)^{-1}(B_1^\top P+D_1^\top PC+S_1)\big](t) \\
&\q=\big[(\Pi B_1+C^\top \Pi D_1+\h S^{\,\top})(\h R+D_1^\top\Pi D_1)^{-1}
         (B_1^\top\Pi+D_1^\top\Pi C+\h S\,)\big](t).
\end{align*}
It follows that for some $\h Q\in L^\i(\th,\t;\dbS^n)$ with $\h Q\ges0$,
\begin{align*}
&\dot\Pi + \Pi A + A^\top\Pi + C^\top\Pi C + \h Q \\
&\hp{\dot\Pi} -(\Pi B_1+C^\top \Pi D_1+\h S^{\,\top})(\h R+D_1^\top\Pi D_1)^{-1}
               (B_1^\top\Pi+D_1^\top\Pi C+\h S\,) =0.
\end{align*}
Since $\Pi(\t)\ges0$, $\h Q\ges0$, and $\h R\gg0$, we conclude from \autoref{lmm:SLQ-main-2}(ii)
that $\Pi(t)\ges0$ for all $t\in[\th,\t]$. This shows that $P_1\les P$ on $[\th,\t]$.
In a similar manner we can prove that $P\les P_2$ on $[\th,\t]$.
\end{proof}

The following result establishes the local existence of a strongly regular solution
to the Riccati equation \rf{Ric}.

\begin{proposition}\label{prop:local-sol-Ric}
Let {\rm\ref{A1}--\ref{A3}} hold, and let $P_1$ and $P_2$ be the solutions of
\rf{Ric:1} and \rf{Ric:2}, respectively. For $\t\in(0,T]$ and $H\in\dbS^n$, if
$$ P_1(\t) \les H \les P_2(\t), $$
then the Riccati equation
\begin{equation}\label{Ric:(tau,H)}\left\{\begin{aligned}
 & \dot P + PA + A^\top P + C^\top PC + Q\\
 & \hp{\dot P} -(PB+C^\top PD+S^\top)(R+D^\top PD)^{-1}(B^\top P+D^\top PC+S)=0, \\
 & P(\t)=H
\end{aligned}\right.\end{equation}
is locally solvable at $\t$, that is, for $\e>0$ small enough, \rf{Ric:(tau,H)} has
a strongly regular solution on $[\t-\e,\t]$.
\end{proposition}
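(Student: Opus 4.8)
The plan is to establish local solvability at $\t$ by writing the Riccati equation \rf{Ric:(tau,H)} as an ODE in the space $\dbS^n$ and invoking the standard local existence theorem for ODEs with locally Lipschitz right-hand side. The crucial point that makes the right-hand side well defined near $\t$ is that the block matrix $R(t)+D(t)^\top P(t)D(t)$ must be invertible; in fact, I want to show more, namely that its diagonal blocks have the correct signs so that the resulting solution is strongly regular. First I would note that at $t=\t$ the hypothesis $P_1(\t)\les H\les P_2(\t)$ together with \rf{R11+D1P1D1>>0} and monotonicity in $P$ of the map $K\mapsto R_{11}(t)+D_1(t)^\top KD_1(t)$ gives
$$
R_{11}(\t)+D_1(\t)^\top H D_1(\t) \ges R_{11}(\t)+D_1(\t)^\top P_1(\t) D_1(\t) \gg 0,
$$
and symmetrically $R_{22}(\t)+D_2(\t)^\top H D_2(\t)\ll 0$ from \rf{R22+D2P2D2<<0}. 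Hence at $t=\t$ the matrix $\cR(\t,H)$ has a uniformly positive definite $(1,1)$-block $\cM$ and a uniformly negative definite $(2,2)$-block $\cN$, and its Schur complement $\cN-\cL^\top\cM^{-1}\cL$ is also negative definite (being $\cN$ minus a positive semidefinite matrix), so by \autoref{lmm:matrix} the block matrix $\cR(\t,H)$ is invertible.

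Next, by continuity of all the coefficients and of $P_1,P_2$, the inequalities $P_1(t)\les P_2(t)$ persist on a neighborhood of $\t$, and the sign conditions on the diagonal blocks of $\cR(t,K)$ — and therefore the invertibility of $\cR(t,K)$ via \autoref{lmm:matrix} — hold uniformly for all $(t,K)$ in a small box $\{|t-\t|\les\e_0\}\times\{|K-H|\les r_0\}$. On this box the right-hand side of the Riccati equation, viewed as a function $F(t,P)=-(PA+A^\top P+C^\top PC+Q)+(PB+C^\top PD+S^\top)(R+D^\top PD)^{-1}(B^\top P+D^\top PC+S)$, is measurable in $t$, continuous in $P$, and locally Lipschitz in $P$ uniformly in $t$ (the only nonlinearity beyond the quadratic terms is the matrix inverse, which is smooth on the invertible locus, and all coefficients are in $L^\i$). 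Thus by the Carathéodory existence-and-uniqueness theorem, equation \rf{Ric:(tau,H)} has a unique absolutely continuous solution $P$ on $[\t-\e,\t]$ for some $\e\in(0,\e_0]$, with values staying in the box $|P(t)-H|\les r_0$.

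Finally I would check that this local solution $P$ is in fact \emph{strongly regular} on $[\t-\e,\t]$: shrinking $\e$ if necessary so that $P$ stays close enough to $H$, the uniform sign bounds established on the box give $R_{11}+D_1^\top PD_1\gg0$ and $R_{22}+D_2^\top PD_2\ll0$ on $[\t-\e,\t]$, which is precisely condition (i) in the definition of a strongly regular solution, and condition (ii) is the ODE itself. The main obstacle here is not the ODE theory, which is routine, but rather ensuring that the \emph{two-sided} sign structure (positive definite $(1,1)$-block, negative definite $(2,2)$-block) is available at the starting point $\t$ — this is exactly why the hypothesis $P_1(\t)\les H\les P_2(\t)$ is imposed, and the comparison framework of \autoref{prop:comparison} is the reason one expects this interval-sandwich condition to propagate; the delicate global question of whether $\e$ can be taken all the way to cover $[0,T]$ is deferred and is what makes \autoref{thm:main-Ric} the hard theorem.
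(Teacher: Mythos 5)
Your proposal follows essentially the same route as the paper: use the sandwich $P_1(\t)\les H\les P_2(\t)$ together with $R_{11}+D_1^\top P_1D_1\gg0$ and $R_{22}+D_2^\top P_2D_2\ll0$ to obtain the two-sided sign structure of $\cR(t,K)$ for $(t,K)$ near $(\t,H)$, deduce via \autoref{lmm:matrix} (including the Schur-complement bound) that the right-hand side of the Riccati equation is Lipschitz in $P$ on that set, and conclude by Picard iteration / Carath\'eodory. One justification should be restated: you argue that the sign conditions persist on a box around $(\t,H)$ partly ``by continuity of all the coefficients,'' but $R_{ij}$ and $D_i$ are only in $L^\i(0,T)$, so definiteness of $R_{11}(\t)+D_1(\t)^\top HD_1(\t)$ at the single time $\t$ says nothing about nearby times. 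The correct (and available) argument, which is what the paper does, is to compare at each fixed $t$ against $P_1(t)$ rather than propagate from $t=\t$: write $R_{11}(t)+D_1(t)^\top KD_1(t)=\big[R_{11}(t)+D_1(t)^\top P_1(t)D_1(t)\big]+D_1(t)^\top\big(K-P_1(t)\big)D_1(t)$, note that $K\ges P_1(t)-\big(|K-H|+|P_1(\t)-P_1(t)|\big)I$ by $H\ges P_1(\t)$ and the continuity of $P_1$ alone, and use the a.e.\ bound $R_{11}+D_1^\top P_1D_1\ges\a I$ (normalized to hold for every $t$ after modifying the coefficients on a null set). With that repair your argument coincides with the paper's proof.
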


\begin{proof}
We have seen that for some constant $\a>0$,
\begin{align}\label{R11>0-R22<0}
R_{11}(t)+D_1(t)^\top P_1(t)D_1(t) \ges  \a I_{m_1}, \q
R_{22}(t)+D_2(t)^\top P_2(t)D_2(t) \les -\a I_{m_2},
\end{align}
for almost every $t\in[0,T]$. Since changing the values of $R_{ii}$ and $D_i$ ($i=1,2$) on a set
of Lebesgue measure zero does not affect the solvability of the Riccati equation \rf{Ric:(tau,H)},
we may assume without loss of generality that \rf{R11>0-R22<0} holds for all $t\in[0,T]$. Let us
denote by $\|D\|_\i$ the essential supremum of $D=(D_1, D_2)\in L^\i(0,T;\dbR^{n\times (m_1+m_2)})$,
and let
$$ r={\a\over4(\|D\|_\i^2+1)}. $$
Since $P_1$ and $P_2$ are continuous, we can choose a small $\d>0$ such that
$$ |P_i(t)-P_i(\t)| \les r, \q\forall t\in[\t-\d,\t],~i=1,2. $$
Denote by $\cB_r(H)$ the closed ball in $\dbS^n$ with center $H$ and radius $r$.
Then for any $t\in[\t-\d,\t]$ and $M\in\cB_r(H)$,
\begin{align}\label{Br:1}
R_{11}(t)+D_1(t)^\top MD_1(t)
  &=    R_{11}(t)+D_1(t)^\top HD_1(t) + D_1(t)^\top(M-H)D_1(t)  \nn\\
  &\ges R_{11}(t)+D_1(t)^\top P_1(\t)D_1(t) - \|D\|_\i^2|M-H|I_{m_1} \nn\\
  &\ges R_{11}(t)+D_1(t)^\top P_1(t)D_1(t) - \|D\|_\i^2|P_1(\t)-P_1(t)|I_{m_1}  \nn\\
  &~\hp{=} - \|D\|_\i^2|M-H|I_{m_1} \nn\\
  &\ges \a I_{m_1} - 2r\|D\|_\i^2 I_{m_1} \nn\\
  &\ges {\a\over2} I_{m_1}.
\end{align}
Similarly, for any $t\in[\t-\d,\t]$ and $M\in\cB_r(H)$,
\begin{align}\label{Br:2}
R_{22}(t)+D_2(t)^\top MD_2(t) \les -{\a\over2}I_{m_2}.
\end{align}
From \rf{Br:1} and \rf{Br:2} we conclude that the function
$$ F:[0,T]\times\dbS^n \to \dbS^n $$
defined by (recalling the notation introduced in the proof of \autoref{prop:comparison})
\begin{align*}
F(t,P) &= PA(t) + A(t)^\top P + C(t)^\top PC(t) + Q(t)-\cS(t,P)^\top\cR(t,P)^{-1}\cS(t,P)
\end{align*}
is Lipschitz continuous in $P$ on $[\t-\d,\t]\times\cB_r(H)$, that is, there exists a
constant $\rho>0$ such that
$$ |F(t,P)-F(t,M)| \les \rho|P-M|, \q\forall t\in[\t-\d,\t],~\forall P,M\in\cB_r(H). $$
Indeed, we see from \rf{Br:1} and \rf{Br:2} that $\cM(t,P)$ and $\cN(t,P)$ are invertible
for every $(t,P)\in[\t-\d,\t]\times\cB_r(H)$ with
$$ |\cM(t,P)^{-1}| \les {2\over\a}\sqrt{m_1}, \q |\cN(t,P)^{-1}| \les {2\over\a}\sqrt{m_2}. $$
Moreover, since $\F(t,P)\deq\cN(t,P)-\cL(t,P)^\top\cM(t,P)^{-1}\cL(t,P)\les \cN(t,P)$, we have
\begin{align*}
\big|\F(t,P)^{-1}\big| \les {2\over\a}\sqrt{m_2}, \q\forall (t,P)\in[\t-\d,\t]\times\cB_r(H).
\end{align*}
Since the coefficients of the state equation and the weighting matrices in the cost functional are bounded,
we can choose a constant $\rho>0$ such that
$$ |\cL(t,P)|+|\cS(t,P)| \les \rho, \q\forall (t,P)\in[\t-\d,\t]\times\cB_r(H). $$
For convenience, in the sequel we shall use the same letter $\rho$ to denote constants independent
of $(t,P)\in[\t-\d,\t]\times\cB_r(H)$. Then we have by \autoref{lmm:matrix},
\begin{align*}
\big|\cR(t,P)^{-1}\big|^2
&= \lt|\begin{pmatrix}\cM(t,P)      & \cL(t,P) \\[1mm]
                      \cL(t,P)^\top & \cN(t,P) \end{pmatrix}^{-1}\rt|^2 \\
&= |\cM(t,P)^{-1} + \cM(t,P)^{-1}\cL(t,P)\F(t,P)^{-1} \cL(t,P)^\top\cM(t,P)^{-1}|^2  \\
&~\hp{=}+ 2|\cM(t,P)^{-1}\cL(t,P)\F(t,P)^{-1}|^2 + |\F(t,P)^{-1}|^2 \\
&\les \rho, \q\forall (t,P)\in[\t-\d,\t]\times\cB_r(H).
\end{align*}
Noting that for any $P,M\in\dbS^n$,
\begin{align*}
&\cS(t,P)^\top\cR(t,P)^{-1}\cS(t,P) - \cS(t,M)^\top\cR(t,M)^{-1}\cS(t,M) \\
&\q= [\cS(t,P)-\cS(t,M)]^\top\cR(t,P)^{-1}\cS(t,P) \\
&\q~\hp{=} +\cS(t,M)^\top\cR(t,P)^{-1}[\cS(t,P)-\cS(t,M)] \\
&\q~\hp{=} +\cS(t,M)^\top\cR(t,P)^{-1}D(t)^\top(M-P)D(t)\cR(t,M)^{-1}\cS(t,M),
\end{align*}
and that
$$ |\cS(t,P)-\cS(t,M)| \les \rho|P-M|, \q\forall t\in[\t-\d,\t],~\forall P,M\in\cB_r(H), $$
we obtain the Lipschitz continuity of $F$ in $P$ by computing $|F(t,P)-F(t,M)|$ directly.
Thanks to the Lipschitz continuity of $F$, the existence of a strongly regular solution on a
small interval $[\t-\e,\t]$ follows by the usual Picard's iteration method (or equivalently,
by the contraction mapping theorem).
\end{proof}

We are now ready to give the proof of \autoref{thm:main-Ric}.

\begin{proof}[\textbf{Proof of \autoref{thm:main-Ric}}]
(i) Suppose that \ref{A3} holds for some constant $\a>0$. Then by \autoref{prop:local-sol-Ric},
the Riccati equation \rf{Ric} is locally solvable at $T$.
We show that the local solution of \rf{Ric} can be extended to $[0,T]$.
To this end, let $(\t,T]$ be the maximal interval on which a strongly regular solution $P$ of
\rf{Ric} exists. By \autoref{prop:comparison},
\begin{align}\label{P1<P<P2}
 P_1(t) \les P(t) \les P_2(t), \q\forall t\in(\t,T].
\end{align}
It follows that the function
\begin{align*}
F(t,P(t)) &\deq P(t)A(t) + A(t)^\top P(t) + C(t)^\top P(t)C(t) + Q(t) \\
&~\hp{=} -\cS(t,P(t))^\top\cR(t,P(t))^{-1}\cS(t,P(t))
\end{align*}
is bounded on $(\t,T]$ and hence
$$ P(t) = G + \int_t^T F(s,P(s))ds, \q t\in(\t,T] $$
is uniformly continuous. Thus, the limit $\lim_{t\to\t}P(t)$ exists and is finite, and thereby
we can extend $P$ to the closed interval $[\t,T]$ by setting
$$ P(\t) = \lim_{t\to\t}P(t). $$
Note that \rf{P1<P<P2} implies
$$ P_1(\t)\les P(\t)\les P_2(\t). $$
If $\t>0$, then by \autoref{prop:local-sol-Ric}, the solution $P$ can be further extended to an
interval larger than $[\t,T]$. This contradicts the maximality of $[\t,T]$. So we must have $\t=0$.
\end{proof}

In order to prove \autoref{thm:main-saddle}, we need the following lemma,
whose proof can be found in \cite{Sun-Yong2014}.

\begin{lemma}\label{lmm:Sun2014}
Let {\rm\ref{A1}--\ref{A2}} hold.
A pair $(u_1^*,u_2^*)\in\cU_1[0,T]\times\cU_2[0,T]$ is an open-loop saddle point for the initial state $x$
if and only if \rf{J1>0}--\rf{J2<0} hold and with $u^*\deq\begin{pmatrix}u_1^*\\u_2^*\end{pmatrix}$,
\begin{align}\label{Sun14-Biyao}
B^\top Y^* + D^\top Z^* + SX^* + Ru^* = 0, \q \ae~\hb{on}~[0,T],~\as,
\end{align}
where $(X^*,Y^*,Z^*)$ is the adapted solution to the following decoupled forward-backward stochastic
differential equation (FBSDE, for short):
\begin{equation}\label{FBSDE}\left\{\begin{aligned}
dX^*(t) &= [A(t)X^*(t)+B(t)u^*(t)]dt + [C(t)X^*(t)+D(t)u^*(t)]dW, \\
dY^*(t) &= -[A(t)^\top Y^*(t)+C(t)^\top\! Z^*(t)+Q(t)X^*(t)+S(t)^\top\! u^*(t)]dt + Z^*(t)dW, \\
 X^*(0) &= x, \q Y^*(T)=GX^*(T).
\end{aligned}\right.\end{equation}
\end{lemma}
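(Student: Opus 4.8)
The plan is to split the two saddle inequalities into a pair of one-sided variational problems and to characterise each by the classical ``convexity/concavity plus stationarity'' criterion for quadratic functionals on a Hilbert space. Note first that $(u_1^*,u_2^*)$ is an open-loop saddle point for $x$ if and only if, simultaneously, $u_1^*$ minimises $u_1\mapsto J(x;u_1,u_2^*)$ over $\cU_1[0,T]$ and $u_2^*$ maximises $u_2\mapsto J(x;u_1^*,u_2)$ over $\cU_2[0,T]$. Writing $X^*\deq X_x^{u_1^*,u_2^*}$ and using the affine dependence of the state on the controls, $X_x^{u_1^*+w_1,\,u_2^*}=X^*+X_0^{w_1,0}$, I would expand the quadratic functional \rf{Q-function} bilinearly to get, for every $w_1\in\cU_1[0,T]$,
\begin{align*}
J(x;u_1^*+w_1,u_2^*)=J(x;u_1^*,u_2^*)+2\,\d_1(w_1)+J(0;w_1,0),
\end{align*}
where $\d_1$ is a bounded linear functional on $\cU_1[0,T]$ and the purely quadratic remainder is exactly $J(0;w_1,0)$, since substituting $(X_0^{w_1,0},w_1,0)$ into \rf{Q-function} reproduces it. A symmetric identity holds for $J(x;u_1^*,u_2^*+w_2)$, with remainder $J(0;0,w_2)$ and linear part $\d_2$.

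The heart of the argument is to put $\d_1$ and $\d_2$ into dual form. Let $(X^*,Y^*,Z^*)$ be the adapted solution of the decoupled FBSDE \rf{FBSDE}; this is well posed, since the forward equation is driven only by the given $u^*$, after which the backward equation is a linear BSDE with bounded coefficients and square-integrable data. I would then apply It\^o's formula to $t\mapsto\lan Y^*(t),X_0^{w_1,0}(t)\ran$, use $X_0^{w_1,0}(0)=0$ and $Y^*(T)=GX^*(T)$, and cancel the $\lan A^\top Y^*,\cdot\ran$ and $\lan C^\top Z^*,\cdot\ran$ contributions against the matching drift and diffusion terms of $X_0^{w_1,0}$; this yields
\begin{align*}
\dbE\lan GX^*(T),X_0^{w_1,0}(T)\ran=\dbE\int_0^T\big[\lan B_1^\top Y^*+D_1^\top Z^*,\,w_1\ran-\lan QX^*+S^\top u^*,\,X_0^{w_1,0}\ran\big]dt .
\end{align*}
Substituting this into the expression for $\d_1$ cancels the running state-variation term and leaves
\begin{align*}
\d_1(w_1)=\dbE\int_0^T\lan B_1^\top Y^*+D_1^\top Z^*+S_1X^*+R_{11}u_1^*+R_{12}u_2^*,\,w_1\ran\,dt ,
\end{align*}
and the same computation with the player indices interchanged gives the analogous formula for $\d_2(w_2)$ involving $B_2,D_2,S_2,R_{21},R_{22}$; stacking the two integrands reproduces precisely $B^\top Y^*+D^\top Z^*+SX^*+Ru^*$.

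With these expansions the equivalence is routine. For necessity, if $(u_1^*,u_2^*)$ is a saddle point then, for each $w_1$, the scalar parabola $t\mapsto J(x;u_1^*,u_2^*)+2t\,\d_1(w_1)+t^2J(0;w_1,0)$ is minimised at $t=0$; this forces $J(0;w_1,0)\ges0$ (otherwise the parabola is unbounded below) and then $\d_1(w_1)=0$, which gives \rf{J1>0} and the vanishing of the first block of \rf{Sun14-Biyao}. The symmetric argument for the maximisation over $u_2$ gives \rf{J2<0} and the vanishing of the second block, hence \rf{Sun14-Biyao}. For sufficiency, if \rf{J1>0}, \rf{J2<0} and \rf{Sun14-Biyao} hold, then \rf{Sun14-Biyao} makes $\d_1\equiv0$ and $\d_2\equiv0$, so the two expansions collapse to $J(x;u_1,u_2^*)=J(x;u_1^*,u_2^*)+J(0;u_1-u_1^*,0)\ges J(x;u_1^*,u_2^*)$ and $J(x;u_1^*,u_2)=J(x;u_1^*,u_2^*)+J(0;0,u_2-u_2^*)\les J(x;u_1^*,u_2^*)$ for all $u_1,u_2$, which is exactly the open-loop saddle-point condition.

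The one genuinely technical step, and the only place where anything beyond elementary algebra of quadratic forms enters, is the It\^o/duality identity of the second paragraph: turning the ``primal'' functional $\d_i$, which carries the terminal coupling $\lan GX^*(T),\cdot\ran$ and the running state-variation coupling, into its ``dual'' expression in $Y^*$ and $Z^*$. Once that identity is in place, what remains is the elementary analysis of scalar parabolas and the affine decomposition of the cost, together with the routine integrability and measurability bookkeeping needed to justify the It\^o formula and to pass between pointwise identities and their $L_\dbF^2$ inner-product counterparts.
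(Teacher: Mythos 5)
Your proposal is correct. The paper does not prove this lemma itself but defers to \cite{Sun-Yong2014}, and your argument---splitting the saddle condition into two one-sided problems, expanding the cost quadratically via the affine dependence of the state on the controls, and converting the linear term into the stationarity condition \rf{Sun14-Biyao} through It\^o/FBSDE duality---is essentially the standard proof given there, so there is nothing to add.
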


\begin{proof}[\textbf{Proof of \autoref{thm:main-saddle}}]
We first prove the uniqueness of an open-loop saddle point.
Suppose that for some $x$, the game has two open-loop saddle points $u^*$ and $v^*$.
Then by \autoref{lmm:Sun2014}, $\bar u =\begin{pmatrix}\bar u_1\\ \bar u_2\end{pmatrix} \deq u^*-v^*$
is an open-loop saddle point for the initial state $0$.
From \ref{A3}, we see that $(0,0)$ is also an open-loop saddle point for the initial state $0$, since
$$J(0;0,u_2) \les J(0;0,0)=0\les J(0;u_1,0), \q \forall (u_1,u_2)\in\cU_1[0,T]\times\cU_2[0,T]. $$
Thus, by the definition of an open-loop saddle point and \ref{A3},
$$ \a\|\bar u_1\|^2 \les J(0;\bar u_1,0) \les J(0;\bar u_1,\bar u_2) \les J(0;0,\bar u_2) \les J(0;0,0) =0, $$
from which we obtain $\bar u_1=0$. Similarly, we can show $\bar u_2=0$. The uniqueness follows.

\ms

In order to prove the existence of an open-loop saddle point and part (ii),
according to \autoref{lmm:Sun2014} it suffices to show that with $u^*$
defined by \rf{u:cloop-rep}, the adapted solution of \rf{FBSDE} satisfies \rf{Sun14-Biyao}.
This can be accomplished by verifying that $(Y^*,Z^*)$ defined by
\begin{align}\label{Y=PX}
Y^* = PX^*, \q Z^*= P(CX^*+Du^*)
\end{align}
is the adapted solution to the BSDE in \rf{FBSDE}. Indeed, integration by parts yields
\begin{align*}
dY^* &= \dot PX^*dt + P(A+B\Th)X^*dt + P(C+D\Th)X^*dW \\
&= -[A^\top P + C^\top PC + Q - \Th^\top\cR(P)\Th- PB\Th]X^*dt + P(C+D\Th)X^*dW \\
&= -[A^\top P + C^\top PC + Q + \cS(P)^\top\Th - PB\Th]X^*dt + P(CX^*+Du^*)dW \\
&= -[A^\top P + C^\top PC + Q + (C^\top PD + S^\top)\Th]X^*dt + Z^*dW \\
&= -[A^\top Y^* + C^\top P(CX^*+Du^*) + QX^* + S^\top u^*]dt + Z^*dW \\
&= -[A^\top Y^* + C^\top Z^* + QX^* + S^\top u^*]dt + Z^*dW.
\end{align*}
On the other hand, $Y^*(T)=P(T)X^*(T)=GX^*(T)$. So with $u^*$ defined by \rf{u:cloop-rep}, the solution $X^*$ to \rf{state:cloop} and $(Y^*,Z^*)$ defined by \rf{Y=PX} satisfy the FBSDE \rf{FBSDE}. Furthermore,
\begin{align*}
B^\top Y^* + D^\top Z^* + SX^* + Ru^*
&= B^\top PX^* + D^\top P(C+D\Th)X^* + SX^* + R\Th X^* \\
&= [B^\top P + D^\top PC+ S + (R+D^\top PD)\Th]X^* \\
&= 0.
\end{align*}
Finally, by integration by parts, we have
\begin{align}\label{GX(T)X(T)}
&\dbE\lan GX^*(T),X^*(T)\ran = \dbE\lan Y^*(T),X^*(T)\ran \nn\\
&\q= \dbE\lan Y^*(0),X^*(0)\ran + \dbE\int_0^T\[\lan B^\top Y^*+D^\top Z^*-SX^*,u^*\ran - \lan QX^*,X^*\ran\] dt.
\end{align}
Substituting \rf{GX(T)X(T)} into
\begin{align*}
J(x;u^*) = \dbE\bigg\{\lan GX^*(T),X^*(T)\ran
+\! \int_0^T\!\[\lan QX^*,X^*\ran + 2\lan SX^*,u^*\ran + \lan Ru^*,u^*\ran\]dt\bigg\},
\end{align*}
and noting that
$$ Y^* = PX^*, \q B^\top Y^* + D^\top Z^* + SX^* + Ru^* = 0, $$
we obtain
\begin{align*}
J(x;u^*) = \dbE\lan Y^*(0),X^*(0)\ran + \dbE\int_0^T \lan B^\top Y^*+D^\top Z^*+SX^*+Ru^*,u^*\ran dt = \lan P(0)x,x\ran.
\end{align*}
This completes the proof.
\end{proof}

\section{Relation between the open-loop saddle point and the open-loop lower and upper values}
\label{Sec:Relation}

In this section we investigate the connection between the open-loop saddle point and
the open-loop lower and upper values.
We shall first show that the existence of an open-loop saddle point implies the existence
of a finite open-loop value and hence the finiteness of the open-loop lower and upper values,
but not vice versa in general.
Then, for the deterministic two-person zero-sum LQ differential game, we give an alternative
proof for Zhang's result \cite{Zhang2005} on the equivalence of the existence of an open-loop
saddle point and the finiteness of the open-loop lower and upper values.
In particular, we show that in the deterministic case, the finiteness of the open-loop lower
and upper values implies the solvability of the Riccati equation, for which a fairly explicit
representation of the solution can be obtained.

\begin{proposition}\label{prop:saddle-value}
Let {\rm\ref{A1}--\ref{A2}} hold. If an open-loop saddle point $(u^*_1,u^*_2)$ exists for
the initial state $x$, then Problem (SLQG) admits a finite open-loop value at $x$ and
$$ V(x) = J(x;u^*_1,u^*_2). $$
\end{proposition}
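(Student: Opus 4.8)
The plan is to exploit the defining saddle-point inequalities directly, since they are tailor-made for squeezing the lower and upper values together. Recall that $(u_1^*,u_2^*)$ being an open-loop saddle point for $x$ means
$$ J(x;u_1^*,u_2) \les J(x;u_1^*,u_2^*) \les J(x;u_1,u_2^*), \q\forall (u_1,u_2)\in\cU_1[0,T]\times\cU_2[0,T]. $$
First I would read off an upper bound for $V^+(x)$: since $J(x;u_1^*,u_2)\les J(x;u_1^*,u_2^*)$ for every $u_2$, taking the supremum over $u_2$ gives $\sup_{u_2}J(x;u_1^*,u_2)\les J(x;u_1^*,u_2^*)$, and hence
$$ V^+(x) = \inf_{u_1}\sup_{u_2}J(x;u_1,u_2) \les \sup_{u_2}J(x;u_1^*,u_2) \les J(x;u_1^*,u_2^*). $$
Symmetrically, from $J(x;u_1^*,u_2^*)\les J(x;u_1,u_2^*)$ for every $u_1$, taking the infimum over $u_1$ gives $J(x;u_1^*,u_2^*)\les \inf_{u_1}J(x;u_1,u_2^*)$, whence
$$ V^-(x) = \sup_{u_2}\inf_{u_1}J(x;u_1,u_2) \ges \inf_{u_1}J(x;u_1,u_2^*) \ges J(x;u_1^*,u_2^*). $$

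Combining these two chains of inequalities with the always-valid inequality $V^-(x)\les V^+(x)$ recalled in the definition, I get
$$ J(x;u_1^*,u_2^*) \les V^-(x) \les V^+(x) \les J(x;u_1^*,u_2^*), $$
so all four quantities coincide. In particular $V^-(x)=V^+(x)$, which by definition says Problem (SLQG) has an open-loop value $V(x)$ at $x$, and this common value equals $J(x;u_1^*,u_2^*)$. Finiteness is immediate: $J(x;u_1^*,u_2^*)$ is a finite real number because $(u_1^*,u_2^*)\in\cU_1[0,T]\times\cU_2[0,T]$ and the coefficients and weighting matrices are bounded (so the state has finite second moments and the quadratic functional is finite on admissible controls).

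Honestly, there is no real obstacle here — this is the standard argument that a saddle point of a function sandwiches its $\inf\sup$ and $\sup\inf$, specialized to the LQ game. The only thing worth a sentence of care is noting that $J(x;u_1^*,u_2^*)$ is genuinely finite (not $\pm\infty$), which follows from admissibility of the pair and boundedness of the data; everything else is a three-line manipulation of suprema and infima applied to the two saddle inequalities.
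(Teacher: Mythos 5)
Your proof is correct and is essentially identical to the paper's own argument: both use the two saddle-point inequalities to sandwich $V^+(x)\les J(x;u_1^*,u_2^*)\les V^-(x)$ and then invoke $V^-(x)\les V^+(x)$ to force equality throughout. The extra sentence confirming that $J(x;u_1^*,u_2^*)$ is a finite real number is a reasonable touch that the paper leaves implicit.
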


\begin{proof}
Suppose that $(u^*_1,u^*_2)$ is an open-loop saddle point for $x$. Then
\begin{align*}
J(x;u^*_1,u^*_2)
  & \les \inf_{u_1\in\cU_1[0,T]} J(x;u_1,u^*_2)
    \les \sup_{u_2\in\cU_2[0,T]} \inf_{u_1\in\cU_1[0,T]} J(x;u_1,u_2) = V^-(x), \\
J(x;u^*_1,u^*_2)
  & \ges \sup_{u_2\in\cU_2[0,T]} J(x;u_1^*,u_2)
    \ges \inf_{u_1\in\cU_1[0,T]} \sup_{u_2\in\cU_2[0,T]} J(x;u_1,u_2) = V^+(x).
\end{align*}
It follows that
$$ V^+(x) \les J(x;u^*_1,u^*_2) \les V^-(x). $$
On the other hand, $V^-(x)\les V^+(x)$. Therefore, equalities hold in the above.
\end{proof}

\autoref{prop:saddle-value} shows that the existence of an open-loop saddle point implies
the finiteness of the open-loop lower and upper values.
However, the converse is not necessarily true in general. Here is an example.

\begin{example}\label{exm:saddle-novuale}
Consider the one-dimensional state equation
$$\left\{\begin{aligned}
dX(t) &= u_1(t)dt + u_2(t) dW(t),\q t\in[0,1], \\
 X(0) &= x,
\end{aligned}\right.$$
and the quadratic functional
$$ J(x;u_1,u_2) = \dbE\bigg\{ |X(1)|^2 + \int_0^1 \[t^2|u_1(t)|^2-|u_2(t)|^2\] dt\bigg\}. $$
For the lower value, we have
$$ V^-(x) \ges \inf_{u_1\in\cU_1[0,T]} J(x;u_1,0)
             = \inf_{u_1\in\cU_1[0,T]} \dbE\bigg\{ |X(1)|^2 + \int_0^1 t^2|u_1(t)|^2 dt\bigg\} \ges0. $$
For the upper value, we have
\begin{align*}
V^+(x) &\les \sup_{u_2\in\cU_2[0,T]} J(x;0,u_2)
           = \sup_{u_2\in\cU_2[0,T]}\dbE\bigg\{|X(1)|^2-\int_0^1|u_2(t)|^2dt\bigg\} \\
       &= \sup_{u_2\in\cU_2[0,T]}\dbE\bigg\{\Big|x+\int_0^1u_2(t)dW(t)\Big|^2-\int_0^1|u_2(t)|^2dt\bigg\} \\
       &= x^2.
\end{align*}
Thus, both the open-loop lower and upper values are finite.
Next we show by contradiction that an open-loop saddle point does not exist for any $x\ne0$.
If $(u_1^*,u_2^*)$ is an open-loop saddle point for some $x\ne0$, then by \autoref{lmm:Sun2014},
the adapted solution $(X^*,Y^*,Z^*)$ to the FBSDE
$$\left\{\begin{aligned}
dX^*(t) &= u_1^*(t)dt + u_2^*(t) dW(t), \q t\in[0,1], \\
dY^*(t) &= Z^*(t) dW(t), \q t\in[0,1], \\
 X^*(0) &= x, \q Y^*(1) = X^*(1)
\end{aligned}\right.$$
should satisfy the following conditions:
\begin{align*}
Y^*(t) + t^2u_1(t) &= 0, \q\ae~t\in[0,1],~\as, \\
Z^*(t) - u_2(t)    &= 0, \q\ae~t\in[0,1],~\as
\end{align*}
By taking expectations, we have
\begin{equation}\label{eqn:EXEY}\left\{\begin{aligned}
d\dbE[X^*(t)] &= \dbE[u_1^*(t)]dt, \q t\in[0,1], \\
d\dbE[Y^*(t)] &= 0, \q t\in[0,1], \\
 \dbE[X^*(0)] &= x, \q \dbE[Y^*(1)] = \dbE[X^*(1)],
\end{aligned}\right.\end{equation}
and
\begin{equation}\label{eqn:E-stationary}
  \dbE[Y^*(t)] + t^2\dbE[u_1(t)] = 0, \q\ae~t\in[0,1].
\end{equation}
From \rf{eqn:EXEY} we see that
$$ \dbE[Y^*(t)] = \dbE[X^*(1)] = x + \int_0^1 \dbE[u_1^*(s)]ds, \q\forall t\in[0,1]. $$
So \rf{eqn:E-stationary} is equivalent to
\begin{equation}\label{eqn:E-stationary*}
  \dbE[X^*(1)] + t^2\dbE[u_1(t)] = 0, \q\ae~t\in[0,1].
\end{equation}
Since $t\mapsto\dbE[u_1(t)]$ is square-integrable on $[0,1]$,  \rf{eqn:E-stationary*} implies
that $\dbE[X^*(1)]$ must be zero and hence $\dbE[u_1(t)]=0$ for almost every $t\in[0,1]$.
This yields a contradiction:
$$ 0 =\dbE[X^*(1)] = x + \int_0^1 \dbE[u_1^*(s)]ds = x. $$
Therefore, this problem has no open-loop saddle point for nonzero initial states.
\end{example}

In the previous discussion we have taken the starting time of the game to be zero for simplicity.
Sometimes it is convenient if we consider Problem (SLQG) over every subinterval $[t,T]$ of $[0,T]$.
In this case, the quadratic functional and the open-loop lower and upper values depend on the
initial time $t$ as well:
\begin{eqnarray*}
&\ds J(t,x;u_1,u_2) = \dbE\Bigg\{\lan GX(T),X(T)\ran +\int_t^T\Blan\!
   \begin{pmatrix}Q   & \!S_1^\top & \!S_2^\top \\
                  S_1 & \!R_{11}   & \!R_{12}   \\
                  S_2 & \!R_{21}   & \!R_{22}   \end{pmatrix}\!
   \begin{pmatrix}X \\ u_1 \\ u_2  \end{pmatrix}\!,
   \begin{pmatrix}X \\ u_1 \\ u_2  \end{pmatrix}\! \Bran ds\Bigg\}, \\
&\ds V^-(t,x) = \sup_{u_2\in\cU_2[t,T]} \inf_{u_1\in\cU_1[t,T]} J(t,x;u_1,u_2), \\
&\ds V^+(t,x) = \inf_{u_1\in\cU_1[t,T]} \sup_{u_2\in\cU_2[t,T]} J(t,x;u_1,u_2),
\end{eqnarray*}
where for $i=1,2$,
\begin{align*}
\cU_i[t,T] &=\ts\Big\{\f:[t,T]\times\Om\to\dbR^{m_i} \bigm|\f\in\dbF,~\dbE\int_t^T|\f(s)|^2ds<\i \Big\}.
\end{align*}
Obviously, with the initial time zero replaced by $t$, the previous results remain true.
Keeping this in mind, we now look at a special case of Problem (SLQG), the deterministic
two-person zero-sum LQ differential game, in which the diffusion part of the state equation
is absent, i.e.,
\begin{align}\label{case:D=0}
 C(s)=0, \q D_1(s)=0, \q D_2(s)=0, \q\forall s\in[0,T],
\end{align}

\begin{theorem}\label{thm:DLQG-main}
Let {\rm\ref{A1}--\ref{A2}} and \rf{case:D=0} hold. Suppose that
\begin{equation}\label{D=0:Rii>0}
   R_{11}\gg0, \q R_{22}\ll0.
\end{equation}
Then the following statements are equivalent:
\begin{enumerate}[\rm(i)]\setlength{\itemsep}{-1pt}
\item The open-loop lower and upper values $V^\pm(t,x)$ are finite for every initial pair $(t,x)$.
\item A unique open-loop saddle point exists for every initial pair $(t,x)$.
\end{enumerate}
Moreover, if the above statements hold true, then the Riccati equation
\begin{equation}\label{Ric:D=0}\left\{\begin{aligned}
 & \dot P + PA + A^\top P + Q -(PB+S^\top)R^{-1}(B^\top P+S)=0, \\
 & P(T)=G
\end{aligned}\right.\end{equation}
admits a unique solution $P\in C([0,T];\dbS^n)$, and the unique open-loop saddle point
$u^*=\begin{pmatrix}u_1^*\\u_2^*\end{pmatrix}$ for the initial pair $(t,x)$ is
given by the following closed-loop representation:
\begin{align}\label{DLQG:u*}
 u^*(s) = \Th(s)X^*(s), \q s\in[t,T],
\end{align}
where $\Th=-R^{-1}(B^\top P+S)$ and $X^*$ is the solution to the closed-loop system
$$\left\{\begin{aligned}
dX^*(s) &= [A(s)+B(s)\Th(s)]X^*(s)ds, \\
 X^*(t) &= x.
\end{aligned}\right.$$
\end{theorem}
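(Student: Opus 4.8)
The plan is to prove (ii)$\Rightarrow$(i) directly and to obtain (i)$\Rightarrow$(ii) as a by-product of the solvability of the Riccati equation \rf{Ric:D=0}, which is also the content of the last assertion. The implication (ii)$\Rightarrow$(i) is immediate from \autoref{prop:saddle-value} applied with initial time $t$: an open-loop saddle point at $(t,x)$ forces $V^-(t,x)=V^+(t,x)=J(t,x;u_1^*,u_2^*)$, which is finite. So the work is all in (i)$\Rightarrow$(ii).

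Assume (i) and \rf{D=0:Rii>0}. Two structural facts come first. By \autoref{thm:tu-ao} applied at every initial time, $J(t,0;u_1,0)\ges0$ and $J(t,0;0,u_2)\les0$ for all $t\in[0,T]$. By \autoref{lmm:matrix}, the hypotheses $R_{11}\gg0$, $R_{22}\ll0$ make $R$ invertible with $R^{-1}\in L^\i(0,T;\dbR^{(m_1+m_2)\times(m_1+m_2)})$, since $R_{22}-R_{21}R_{11}^{-1}R_{12}\les R_{22}\ll0$. Hence the right-hand side of \rf{Ric:D=0} is a genuine function of $P$, locally Lipschitz uniformly in $t$; so \rf{Ric:D=0} has a unique maximal solution $P$ on an interval $(\t,T]\subseteq[0,T]$, and any $C([0,T];\dbS^n)$ solution, if one exists, is the unique one.

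It remains to show $\t<0$. For this I would use the $\l$-regularization of \autoref{Sec:Value}. For each $\l\in(0,1]$ the cost $J_\l$ satisfies \ref{A3} (with $R_{11}+\l I\gg0$ and $R_{22}-\l I\ll0$), so by \autoref{thm:main-Ric} and \autoref{thm:main-saddle} the perturbed Riccati equation has a strongly regular solution $P_\l\in C([0,T];\dbS^n)$, the perturbed game has open-loop value $V_\l(t,x)=\lan P_\l(t)x,x\ran$ at every $(t,x)$, and (again by \autoref{lmm:matrix}) $(R+\mathrm{diag}(\l I,-\l I))^{-1}$ is bounded uniformly in $\l$. The decisive estimate is $\sup_{\l\in(0,1]}\|P_\l\|_{C([0,T];\dbS^n)}<\i$. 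The relation $V_\l(t,x)=\lan P_\l(t)x,x\ran$ together with \autoref{prop:Vl-V} traps $\lan P_\l(t)x,x\ran$, for each fixed $(t,x)$ and all small $\l$, between quantities governed by $V^-(t,x)$ and $V^+(t,x)$; promoting this to a bound uniform in $t$ is the heart of the argument, and I expect it to follow from a continuation/maximal-interval analysis of \rf{Ric:D=0} paralleling the proof of \autoref{thm:main-Ric} (with the globally defined Riccati solutions of the perturbed one-player problems as comparison functions), combined with the pointwise bounds just described. Granting this, \rf{Ric:D=0} gives $\sup_\l\|\dot P_\l\|<\i$, so $\{P_\l\}$ is precompact in $C([0,T];\dbS^n)$; passing to the limit in the integral form of the perturbed Riccati equation (using $(R+\mathrm{diag}(\l I,-\l I))^{-1}\to R^{-1}$) shows each limit point solves \rf{Ric:D=0} on $[0,T]$, so by the uniqueness above $P_\l\to P\in C([0,T];\dbS^n)$, the unique solution of \rf{Ric:D=0}; in particular $\t<0$.

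With the global solution $P$ in hand, the rest follows \autoref{thm:main-saddle} with $C=D_i=0$. Put $\Th=-R^{-1}(B^\top P+S)$, let $X^*$ solve the closed-loop system in the statement, and set $u^*=\Th X^*$, $Y^*=PX^*$. Integration by parts shows $(X^*,Y^*,0)$ solves the forward-backward system of \autoref{lmm:Sun2014} and that $B^\top Y^*+SX^*+Ru^*=0$; since $J(t,0;u_1,0)\ges0$ and $J(t,0;0,u_2)\les0$, \autoref{lmm:Sun2014} gives that $u^*$ is an open-loop saddle point at $(t,x)$, and the usual integration-by-parts identity gives $J(t,x;u^*)=\lan P(t)x,x\ran$. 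For uniqueness one cannot invoke \ref{A3}; instead, if $(v_1,v_2)$ is any saddle point at $(t,x)$, \autoref{lmm:Sun2014} and the invertibility of $R$ force $v=-R^{-1}(B^\top Y^v+SX^v)$, so $(X^v,Y^v)$ solves a fixed linear two-point boundary value problem, and the substitution $W=Y^v-PX^v$ together with the global existence of $P$ on $[t,T]$ shows this problem has at most one solution; hence $(v_1,v_2)=u^*$. The main obstacle is plainly the uniform no-blow-up estimate for $P_\l$, that is, the global solvability of \rf{Ric:D=0}; everything else is a routine adaptation of \autoref{thm:main-Ric}, \autoref{thm:main-saddle}, and \autoref{lmm:Sun2014}.
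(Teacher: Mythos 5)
Your overall strategy---regularize with $J_\l$, solve the perturbed Riccati equations to get $P_\l$ with $V_\l(t,x)=\lan P_\l(t)x,x\ran$, and let $\l\to0$---is exactly the paper's, and your reduction of the saddle-point existence, representation, and value formula to \autoref{lmm:Sun2014} once the global solution $P$ is in hand is also sound (your two-point boundary-value argument for uniqueness is in fact more explicit than the paper's, which simply points back to the argument of \autoref{thm:main-saddle}). The problem is that you have left the decisive step unproved: you need $\sup_{\l\in(0,1]}\|P_\l\|_{C([0,T];\dbS^n)}<\i$ to run Arzel\`a--Ascoli, and you only say you ``expect it to follow'' from a continuation analysis with the one-player comparison solutions. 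That expectation is not justified: \autoref{prop:Vl-V} and polarization give boundedness of $P_\l(t)$ only for each \emph{fixed} $t$ and only for $\l$ small depending on $(t,x)$ (the near-optimal $u_2^\e$ whose norm enters the estimate depends on the initial pair), and the comparison bounds $P_{1,\l}\les P_\l\les P_{2,\l}$ do not help unless you also show the one-player Riccati solutions $P_{i,\l}$ stay bounded uniformly as $\l\downarrow0$, which under the mere convexity conditions $J(t,0;u_1,0)\ges0$, $J(t,0;0,u_2)\les0$ is not available. This is precisely the hard part of the theorem, so the proposal has a genuine gap.

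The paper circumvents the uniform estimate entirely. Fixing $t$ and extracting a convergent subsequence $P_\l(t)\to P(t)$ (pointwise boundedness suffices for this), it propagates the initial data $(x,P_\l(t)x)$ forward through the Hamiltonian system with fundamental matrix $\Psi_\l$, passes to the limit in that \emph{forward linear} ODE (where convergence of $\Psi_\l\to\Psi$ is elementary), and retains the terminal relation $Y(T)=GX(T)$. \autoref{lmm:Yong} then shows the matrix $\L(t)=(G,-I_n)\Psi(T)\Psi(t)^{-1}\binom{0}{I_n}$ is invertible, which yields the closed-form expression \rf{P-rep} for $P(t)$. This formula identifies the limit independently of the subsequence, is manifestly continuous and differentiable in $t$, and differentiating it shows $P$ solves \rf{Ric:D=0} on all of $[0,T]$---so no compactness in $C([0,T];\dbS^n)$ and no uniform no-blow-up bound are ever needed. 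If you want to complete your own route, you would have to supply the missing uniform estimate; otherwise the limit must be identified by some device like the paper's fundamental-matrix representation together with \autoref{lmm:Yong}.
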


In order to prove the above result, we need the following lemma.

\begin{lemma}\label{lmm:Yong}
Let $\Psi\in\dbR^{(2n)\times(2n)}$ be an invertible matrix and $\Sigma\in\dbR^{n\times n}$.
Suppose that for every $x\in\dbR^n$, there exists a $y\in\dbR^n$ such that
\begin{align}\label{Yong}
(\Si, -I_n)\Psi\begin{pmatrix}x \\ y\end{pmatrix}=0.
\end{align}
Then the $n\times n$ matrix $(\Si, -I_n)\Psi\begin{pmatrix}0 \\ I_n\end{pmatrix}$ is invertible.
\end{lemma}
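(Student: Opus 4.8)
The statement is a purely linear-algebraic fact, and the natural approach is to analyze the linear map $y \mapsto (\Si,-I_n)\Psi\begin{pmatrix}0\\y\end{pmatrix}$ and show it is injective, hence invertible. Write $\Psi = \begin{pmatrix}\Psi_{11} & \Psi_{12}\\ \Psi_{21} & \Psi_{22}\end{pmatrix}$ in $n\times n$ blocks. Then $(\Si,-I_n)\Psi\begin{pmatrix}x\\y\end{pmatrix} = (\Si\Psi_{11}-\Psi_{21})x + (\Si\Psi_{12}-\Psi_{22})y$, so the hypothesis says: for every $x$ there is $y$ with $(\Si\Psi_{12}-\Psi_{22})y = -(\Si\Psi_{11}-\Psi_{21})x$. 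In particular the range of $\Si\Psi_{12}-\Psi_{22}$ contains the range of $\Si\Psi_{11}-\Psi_{21}$, equivalently $\operatorname{Range}(\Si\Psi_{12}-\Psi_{22}) \supseteq \operatorname{Range}(\Si\Psi_{11}-\Psi_{21})$. Denote $M \deq \Si\Psi_{12}-\Psi_{22}$; the goal is precisely to show $M$ is invertible, since $M = (\Si,-I_n)\Psi\begin{pmatrix}0\\I_n\end{pmatrix}$.

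The key idea is to use invertibility of $\Psi$ to control the kernel of $M$. First I would observe that $(\Si,-I_n)$ has full row rank $n$, so $(\Si,-I_n)\Psi$ has rank $n$ (since $\Psi$ is invertible), i.e. the combined map $(x,y)\mapsto (\Si\Psi_{11}-\Psi_{21})x + My$ is surjective onto $\dbR^n$. Combined with the hypothesis that already the ``$x$-part'' can be absorbed by the ``$y$-part'', surjectivity forces $M$ itself to be surjective: given any $b\in\dbR^n$, pick $(x,y)$ with $(\Si\Psi_{11}-\Psi_{21})x + My = b$, then replace $y$ by $y+y'$ where $My' = -(\Si\Psi_{11}-\Psi_{21})x$ (which exists by the hypothesis), obtaining $M(y+y') = b$. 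A surjective endomorphism of $\dbR^n$ is invertible, which is the conclusion.

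The main point to get right — and the only place the full strength of ``$\Psi$ invertible'' is genuinely used — is the step that $(\Si,-I_n)\Psi$ has rank exactly $n$; this is immediate because right-multiplication by an invertible matrix preserves rank and $(\Si,-I_n)$ visibly has rank $n$ (its right block is $-I_n$). Everything else is a short surjectivity argument. I would present it in the order: (1) reduce the conclusion to invertibility of $M = (\Si,-I_n)\Psi\begin{pmatrix}0\\I_n\end{pmatrix}$; (2) note $(\Si,-I_n)\Psi$ is surjective; (3) use the hypothesis to transfer surjectivity to $M$; (4) conclude $M$ is invertible. An alternative, essentially equivalent route is a dimension count: $\ker M$ has dimension $n - \operatorname{rank} M$, while $\operatorname{rank}\big((\Si\Psi_{11}-\Psi_{21}\mid M)\big) = n$ together with $\operatorname{Range}(\Si\Psi_{11}-\Psi_{21})\subseteq\operatorname{Range} M$ forces $\operatorname{rank} M = n$. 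I do not anticipate a serious obstacle here; the only thing to be careful about is not to accidentally assume $\Si\Psi_{12}-\Psi_{22}$ square-injective before proving it, and to phrase the range inclusion cleanly from the ``for every $x$ there exists $y$'' hypothesis.
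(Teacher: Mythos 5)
Your proposal is correct, and it is essentially the paper's argument in dual form: the paper fixes a putative left null vector $\eta$ of $(\Si,-I_n)\Psi\begin{pmatrix}0\\I_n\end{pmatrix}$, uses the hypothesis to show $\eta$ also annihilates $(\Si,-I_n)\Psi\begin{pmatrix}I_n\\0\end{pmatrix}$, and then invokes the invertibility of $\Psi$ and the visible $-I_n$ block to force $\eta=0$, which is exactly your two ingredients (range inclusion from the hypothesis, full rank of $(\Si,-I_n)\Psi$) phrased for rows instead of columns. No gaps; either presentation is fine.
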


\begin{proof}
Suppose to the contrary that there is a nonzero vector $\eta\in\dbR^n$ such that
\begin{align}\label{Yong:1}
 \eta^\top(\Si, -I_n)\Psi\begin{pmatrix}0 \\ I_n\end{pmatrix}=0.
\end{align}
Let $x\in\dbR^n$ be an arbitrary vector, and let $y=y(x)$ be such that \rf{Yong} holds. Then
$$ (\Si, -I_n)\Psi\begin{pmatrix}I_n\\0\end{pmatrix}x
= -(\Si, -I_n)\Psi\begin{pmatrix}0 \\ I_n\end{pmatrix}y, $$
and hence
$$ \eta^\top(\Si, -I_n)\Psi\begin{pmatrix}I_n\\0\end{pmatrix}x
= -\eta^\top(\Si, -I_n)\Psi\begin{pmatrix}0 \\ I_n\end{pmatrix}y =0. $$
Since $x\in\dbR^n$ is arbitrary, it follows from the above that
\begin{align}\label{Yong:2}
 \eta^\top(\Si, -I_n)\Psi\begin{pmatrix}I_n\\0\end{pmatrix}=0.
\end{align}
Combining \rf{Yong:1} and \rf{Yong:2} we obtain
\begin{align*}
 \eta^\top(\Si, -I_n)\Psi\begin{pmatrix}0&I_n\\I_n&0\end{pmatrix}=(0,0).
\end{align*}
Since $\Psi$ is invertible, we have $\eta^\top(\Si, -I_n)=(0,0)$ and hence $\eta^\top=0$.
This is a contradiction to the choice of $\eta$.
\end{proof}

\begin{proof}[\textbf{Proof of \autoref{thm:DLQG-main}}]
Clearly, (ii) implies (i). For the converse implication, we consider for $\l>0$,
the quadratic functional $J_{\l}(t,x;u_1,u_2)$ defined by
$$ J_{\l}(t,x;u_1,u_2) \deq J(t,x;u_1,u_2) + \l\dbE\int_t^T|u_1(s)|^2ds - \l\dbE\int_t^T|u_2(s)|^2ds. $$
Since $V^\pm(0,x)$ are finite for all $x$, we see from \autoref{thm:tu-ao} that
\begin{alignat*}{3}
J_\l(0,0;u_1,0) &\ges \l\|u_1\|^2,  \q&& \forall u_1\in\cU_1[0,T], \\
J_\l(0,0;0,u_2) &\les -\l\|u_2\|^2, \q&& \forall u_2\in\cU_2[0,T].
\end{alignat*}
Then it follows from \autoref{thm:main-Ric} that the following Riccati equation admits a solution
$P_\l\in C([0,T];\dbS^n)$:
$$\left\{\begin{aligned}
 & \dot P_\l + P_\l A + A^\top P_\l + Q -(P_\l B+S^\top)R_\l^{-1}(B^\top P_\l+S)=0, \\
 & P_\l(T)=G,
\end{aligned}\right.$$
where
$$ R_\l = \begin{pmatrix}R_{11}+\l I_{m_1} & R_{12} \\ R_{21} & R_{22}-\l I_{m_2}\end{pmatrix}. $$
Further, by \autoref{thm:main-saddle} and \autoref{prop:saddle-value},
$$ V_{\l}(t,x) = \lan P_\l(t)x,x\ran, \q\forall x\in\dbR^n. $$
Since $V^\pm(t,x)$ are finite for all $(t,x)$, we conclude by \autoref{prop:Vl-V} that
for each $t\in[0,T]$, $\{P_\l(t)\}$ has a convergent subsequence $\{P_{\l_k}(t)\}_{k=1}^\i$
($\lim_{k\to\i}\l_k=0$) with limit $P(t)$.
We claim that the function $P$ is a solution to the Riccati equation \rf{Ric:D=0}.
To this end, let us fix $t\in[0,T)$ and assume without loss of generality that $\{P_\l(t)\}$
itself converges to $P(t)$ as $\l\to0$.
Consider, for each $x\in\dbR^n$, the following matrix {\it forward} ODE:
\begin{equation*}\left\{\begin{aligned}
\dot X_\l(s) &= (A-BR_\l^{-1}S)X_\l - BR_\l^{-1}B^\top Y_\l,  \\
\dot Y_\l(s) &= -(A-BR_\l^{-1}S)^\top Y_\l - (Q-S^\top R_\l^{-1}S)X_\l,  \\
     X_\l(t) &= x, \q Y_\l(t)= P_\l(t)x.
\end{aligned}\right.\end{equation*}
It has a unique solution $(X_\l,Y_\l)\in C([t,T];\dbR^n)\times C([t,T];\dbR^n)$,
and one can verify directly that $X_\l$ is given by the ODE
\begin{equation*}\left\{\begin{aligned}
\dot X_\l(s) &= (A-BR_\l^{-1}S - BR_\l^{-1}B^\top P_\l)X_\l,  \\
     X_\l(t) &= x,
\end{aligned}\right.\end{equation*}
and that $Y_\l$ is given by
\begin{equation}\label{Yl=PlXl}
 Y_\l(s)=P_\l(s)X_\l(s),  \q s\in[t,T].
\end{equation}
Let $\Psi_\l(s)$ be the fundamental matrix for the homogeneous system
$$\dot\Bx(s)
= \begin{pmatrix}A-BR_\l^{-1}S & -BR_\l^{-1}B^\top     \\
        -(Q-S^\top R_\l^{-1}S) & -(A-BR_\l^{-1}S)^\top \end{pmatrix}\Bx(s), \q s\in[0,T]. $$
Then we have
$$\begin{pmatrix}X_\l(s) \\ Y_\l(s)\end{pmatrix}
= \Psi_\l(s)\Psi_\l(t)^{-1}\begin{pmatrix}x \\ P_\l(t)x\end{pmatrix}. $$
Note that as $\l\to0$, $P_\l(t)$ converges to $P(t)$, and for all $s\in[0,T]$,
$\Psi_\l(s)$ converges to $\Psi(s)$, the fundamental matrix for the homogeneous system
\begin{align}\label{Psi-Bx}
\dot\Bx(s) = \begin{pmatrix}A-BR^{-1}S & -BR^{-1}B^\top     \\
                   -(Q-S^\top R^{-1}S) & -(A-BR^{-1}S)^\top \end{pmatrix}\Bx(s), \q s\in[0,T].
\end{align}
Thus, $(X(s),Y(s))\deq\lim_{\l\to0}(X_\l(s),Y_\l(s))$ exists for every $s\in[t,T]$ and
$$\begin{pmatrix}X(s) \\ Y(s)\end{pmatrix}
= \Psi(s)\Psi(t)^{-1}\begin{pmatrix}x \\ P(t)x\end{pmatrix}. $$
Noting that by \rf{Yl=PlXl},
$$ Y(T)=\lim_{\l\to0}Y_\l(T)=\lim_{\l\to0}GX_\l(T)= GX(T), $$
we obtain
$$0 = (G, -I_n)\begin{pmatrix}X(T) \\ Y(T)\end{pmatrix}
    = (G, -I_n)\big[\Psi(T)\Psi(t)^{-1}\big]\begin{pmatrix}x \\ P(t)x\end{pmatrix}. $$
Since $x$ is arbitrary, it follows from \autoref{lmm:Yong} that
\begin{align}\label{Lambda-rep}
 \L(t)\deq (G, -I_n)\big[\Psi(T)\Psi(t)^{-1}\big]\begin{pmatrix}0 \\ I_n\end{pmatrix}
\end{align}
is invertible. Consequently, by noting that
\begin{align*}
& (G, -I_n)\big[\Psi(T)\Psi(t)^{-1}\big]\begin{pmatrix}0 \\ I_n\end{pmatrix}P(t)x
  +(G, -I_n)\big[\Psi(T)\Psi(t)^{-1}\big]\begin{pmatrix}I_n \\ 0\end{pmatrix}x \\
&\q= (G, -I_n)\big[\Psi(T)\Psi(t)^{-1}\big]\begin{pmatrix}x \\ P(t)x\end{pmatrix} \\
&\q= 0, \q\forall x\in\dbR^n,
\end{align*}
we obtain
\begin{align}\label{P-rep}
P(t) = -\L(t)^{-1}(G,-I_n)\big[\Psi(T)\Psi(t)^{-1}\big]\begin{pmatrix}I_n \\ 0\end{pmatrix}.
\end{align}
From \rf{P-rep} we see that the function $P$ is differentiable. By differentiating both sides of
$$ \L(t)P(t) = -(G,-I_n)\big[\Psi(T)\Psi(t)^{-1}\big]\begin{pmatrix}I_n \\ 0\end{pmatrix}$$
and then pre-multiplying by $\L(t)^{-1}$, it can be shown that $P$ satisfies the
Riccati equation \rf{Ric:D=0}. The uniqueness of a solution to \rf{Ric:D=0} can be
proved by a standard argument using Gronwall's inequality.
Having the existence of a solution to \rf{Ric:D=0}, we can use the same argument as
in the proof of \autoref{thm:main-saddle} to show that the pair $(u_1^*,u_2^*)$ defined
by \rf{DLQG:u*} is the unique open-loop saddle point for the initial pair $(t,x)$.
\end{proof}

From the above proof, we have the following corollary to \autoref{thm:DLQG-main}.

\begin{corollary}\label{crllry:rep-P}
Under the assumptions of \autoref{thm:DLQG-main}, the solution to the Riccati equation \rf{Ric:D=0}
admits the representation \rf{P-rep}, where $\Psi$ is the fundamental matrix for the homogeneous
system \rf{Psi-Bx} and $\L$ is given by \rf{Lambda-rep}.
\end{corollary}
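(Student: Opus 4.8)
The statement to prove is \autoref{crllry:rep-P}, which simply packages together a representation formula that was already derived inside the proof of \autoref{thm:DLQG-main}. So the "proof" is really a pointer back to that argument, organized so that a reader can see immediately why the formula holds without rereading the whole theorem proof.

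The plan is to recall the three ingredients assembled in the proof of \autoref{thm:DLQG-main}: (a) the Hamiltonian system \rf{Psi-Bx} and its fundamental matrix $\Psi$; (b) the fact that, since the open-loop lower and upper values are finite, the perturbed Riccati solutions $P_\l$ converge along a subsequence to a function $P$ which solves \rf{Ric:D=0}, and that for each $t$ the limiting pair $(X(\cdot),Y(\cdot))$ starting from $(x, P(t)x)$ is propagated by $\Psi(\cdot)\Psi(t)^{-1}$; and (c) the terminal relation $Y(T) = GX(T)$, i.e. $(G,-I_n)\Psi(T)\Psi(t)^{-1}(x,P(t)x)^\top = 0$ for all $x$. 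From (c) and \autoref{lmm:Yong} (applied with $\Sigma = G$ and $\Psi \leftarrow \Psi(T)\Psi(t)^{-1}$, which is invertible as a product of fundamental matrices) one gets that $\L(t)$ defined by \rf{Lambda-rep} is invertible, and then solving the linear relation for $P(t)$ gives exactly \rf{P-rep}. Since by \autoref{thm:DLQG-main} the Riccati equation \rf{Ric:D=0} has a \emph{unique} solution, and the $P$ constructed this way is a solution, that unique solution must coincide with $P$ and hence admits the representation \rf{P-rep}.

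Concretely I would write: "By \autoref{thm:DLQG-main}, under the stated assumptions the Riccati equation \rf{Ric:D=0} has a unique solution $P \in C([0,T];\dbS^n)$, and this solution is precisely the function constructed in the proof of that theorem as a subsequential limit of the perturbed Riccati solutions $P_\l$. In that construction it was shown, for each fixed $t \in [0,T)$, that $(G,-I_n)\big[\Psi(T)\Psi(t)^{-1}\big](x, P(t)x)^\top = 0$ for every $x \in \dbR^n$, where $\Psi$ is the fundamental matrix for \rf{Psi-Bx}. Applying \autoref{lmm:Yong} with $\Psi(T)\Psi(t)^{-1}$ in place of $\Psi$ and $G$ in place of $\Sigma$ shows that $\L(t)$ given by \rf{Lambda-rep} is invertible, and isolating $P(t)x$ in the identity $(G,-I_n)\big[\Psi(T)\Psi(t)^{-1}\big]\big[(0,I_n)^\top P(t)x + (I_n,0)^\top x\big] = 0$ yields \rf{P-rep}. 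For $t = T$ the formula holds trivially since $\Psi(T)\Psi(T)^{-1} = I_{2n}$ gives $\L(T) = -I_n$ and the right-hand side of \rf{P-rep} equals $G = P(T)$." That is essentially the whole proof.

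There is no real obstacle here: every nontrivial step — existence and uniqueness of $P$, the limiting identity at time $T$, the invertibility of $\L(t)$ via \autoref{lmm:Yong}, and the algebraic inversion — was already carried out inside the proof of \autoref{thm:DLQG-main}. The only thing to be a little careful about is to state clearly that one is quoting the construction from that proof rather than redoing it, and to note the trivial endpoint case $t=T$ so that the representation is valid on the full interval $[0,T]$ and not merely on $[0,T)$.
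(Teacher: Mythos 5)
Your proposal is correct and follows exactly the paper's route: the paper likewise treats the corollary as an immediate by-product of the proof of \autoref{thm:DLQG-main}, where the identity $(G,-I_n)\big[\Psi(T)\Psi(t)^{-1}\big]\begin{pmatrix}x\\P(t)x\end{pmatrix}=0$, the invertibility of $\L(t)$ via \autoref{lmm:Yong}, and the resulting formula \rf{P-rep} were already established. Your added remark on the endpoint $t=T$ (where $\L(T)=-I_n$ and \rf{P-rep} reduces to $P(T)=G$) is a harmless and correct refinement.
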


\section{Conclusion}\label{Sec:Conclusion}

In this paper, we studied the open-loop saddle point, as well as the open-loop lower and upper values,
for two-person zero-sum stochastic LQ differential games with deterministic coefficients.
We derived a necessary condition \rf{J1>0} (respectively, \rf{J2<0}) for the finiteness of the open-loop
lower (respectively, upper) value (\autoref{thm:tu-ao}) and showed that under \ref{A3}, a condition
slightly stronger than \rf{J1>0}--\rf{J2<0}, an open-loop saddle point  uniquely exists and admits a
closed-loop representation (\autoref{thm:main-saddle}).
We found that the existence of an open-loop saddle point implies the finiteness of open-loop lower and
upper values (\autoref{prop:saddle-value}), but the latter does not even imply the existence of an
open-loop value (\autoref{V-<V<V+}).
The Riccati equation plays a crucial role throughout this paper.
By investigating the connection between the stochastic LQ differential game and two stochastic LQ
optimal control problems and examining the local existence of a solution to the Riccati equation,
we established the globally strongly regular solvability of the Riccati equation under the condition
\ref{A3} (\autoref{thm:main-Ric}).
We also presented an example showing that the strongly regular solvability of the Riccati equation
does not necessarily imply the condition \ref{A3} (\autoref{ex:PnotA3}).
\autoref{fig:summary} briefly summarizes these results.
For the deterministic two-person zero-sum LQ differential game, which can be regarded as a special
case of the stochastic game, we provided an alternative proof for Zhang's result \cite{Zhang2005}
on the equivalence of the existence of an open-loop saddle point and the finiteness of the open-loop
lower and upper values.
As a by-product of our approach, it was shown that the Riccati equation has an explicit solution when
the open-loop lower and upper values are finite (\autoref{thm:DLQG-main} and \autoref{crllry:rep-P}).

\ms

\begin{figure}[h]\centering\small
\begin{tikzpicture}[node distance=1.1cm, scale=0.6, transform shape]
%\tikzstyle{every node}=[scale=0.5]

%Center part

\node (V1V2) [Block]{$V^-(x)>-\i$ and $V^+(x)<+\i$};

\node (C1) [Implication,below of=V1V2]{$\Big\Uparrow$};

\node (V) [Block,below of=C1]{$V(x)$ exists and is finite};

\node (C2) [Implication,below of=V]{$\Big\Uparrow$};

\node (Saddle) [Block,below of=C2]{Open-loop saddle point exists};

\node (C3) [Implication,below of=Saddle]{$\Big\Uparrow$};

\node (P1P2) [Block,below of=C3]{Both $P_1$ and $P_2$ exist};

\node (C4) [Implication,below of=P1P2]{$\Big\Downarrow$};

\node (P) [Block,below of=C4]{Riccati equation \rf{Ric} of the game has a solution $P$};

%left part

\node (V-) [Block,left of=V1V2,xshift=-6cm]{$V^-(x)>-\i$};

\node (L1) [Implication,below of=V-]{$\Big\Downarrow$};

\node (J1) [Block,left of=V,xshift=-6cm]{$J(0;u_1,0)\ges0$};

\node (L2) [Implication,below of=J1]{$\Big\Uparrow$};

\node (J1+) [Block,left of =Saddle,xshift=-6cm]{$\exists\a>0$~s.t.~$J(0;u_1,0)\ges\a\|u_1\|^2$};

\node (L3) [Implication,below of=J1+]{$\Big\Updownarrow$};

\node (P1) [Block,left of=P1P2,xshift=-6cm]{Riccati equation \rf{Ric:1} of Problem (SLQ)$_1$ has a solution $P_1$};

%Right part

\node (V+) [Block,right of=V1V2,xshift=6cm]{$V^+(x)<+\infty$};

\node (R1) [Implication,below of=V+]{$\Big\Downarrow$};

\node (J2) [Block,right of=V,xshift=6cm]{$J(0;0,u_2)\les0$};

\node (R2) [Implication,below of=J2]{$\Big\Uparrow$};

\node (J2+) [Block,right of =Saddle,xshift=6cm]{$\exists\a>0$~s.t.~$J(0;0,u_2)\les-\a\|u_2\|^2$};

\node (R3) [Implication,below of=J2+]{$\Big\Updownarrow$};

\node (P2) [Block,right of=P1P2,xshift=6cm]{Riccati equation \rf{Ric:2} of Problem (SLQ)$_2$ has a solution $P_2$};

%From left and right parts to center

\draw [jian] (V1V2) to  (V-);
\draw [jian] (V1V2) to  (V+);
\draw [jian] (P1) to (P1P2);
\draw [jian] (P2) to (P1P2);

\end{tikzpicture}
\caption{Summary of the results}\label{fig:summary}
\end{figure}
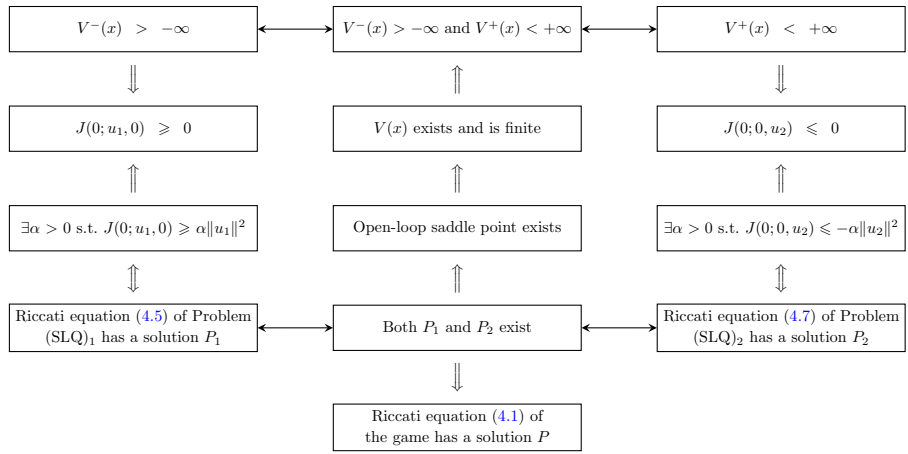

\bs

{\bf Acknowledgement.}
The author would like to thank Prof. Jiongmin Yong for his helpful advice on various technical issues
examined in this paper, which has led to this improved version of the paper.

%------------------------------------------------------------------------------------------------

\end{document}